\newcommand{\na}{\mathbb{N}}
\newcommand{\re}{\mathbb{R}}
\newcommand{\sg}{\mathbb{S}}
\newcommand{\rr}{\mathcal{L}}
\newcommand{\ff}{\mathbb{F}}
\newcommand{\cod}{\mathcal{C}}
\newcommand{\F}{\mathcal{F}}
\newcommand{\sop}{\operatorname{Sup}}
\newcommand{\aut}{\operatorname{Aut}}
\newcommand{\gal}{\operatorname{Gal}}
\newcommand{\sk}{\smallskip}
\newcommand{\msk}{\medskip}
\newtheorem{thm}{Theorem}[section]
\newtheorem{prop}[thm]{Proposition}
\newtheorem{lem}[thm]{Lemma}
\newtheorem{coro}[thm]{Corollary}
\theoremstyle{definition}
\newtheorem{rem}[thm]{Remark}
\newtheorem{exam}[thm]{Example}
\newtheorem{defi}[thm]{Definition}
\theoremstyle{remark}
\newcommand{\legendre}[2]{\left( \tfrac{#1}{#2} \right)}
\begin{document}
\numberwithin{equation}{section}
\title[Block-transitive AG-codes attaining the TVZ-bound]{Block-transitive algebraic geometry codes attaining the Tsfasman-Vladut-Zink bound}
\author[M.\@ Chara, R.\@ Podest\'a, R.\@ Toledano]{Mar\'ia Chara, Ricardo Podest\'a, Ricardo Toledano}
\dedicatory{\today}
\keywords{ AG-codes, algebraic function fields, asymptotic goodness, towers.}
\thanks{2010 {\it Mathematics Subject Classification.} Primary 94B05;\,Secondary 11R58, 14H05, 11G20.}

\thanks{Partially supported by CONICET, FONCyT, UNL CAI+D 2014, SECyT-UNC, CSIC}
\address{Mar\'ia Chara -- Departamento de Matem\'atica, FIQ-UNL-CONICET, FICH (3000) Santa Fe, Argentina. \\
{\it E-mail: mchara@santafe-conicet.gov.ar}}
\address{Ricardo Podest\'a -- FaMAF -- CIEM (CONICET), Universidad Nacional de C\'ordoba, (5000) C\'ordoba, Argentina. 
	{\it E-mail: podesta@famaf.unc.edu.ar}}
\address{Ricardo Toledano -- Departamento de Matem\'atica, Facultad de Ingenier\'ia Qu\'imica, UNL, (3000) Santa Fe, Argentina.  {\it E-mail: ridatole@gmail.com}}

\begin{abstract}
We study the asymptotic behaviour of a family of algebraic geometry codes, which we call block-transitive, that generalizes the classes of transitive and quasi-transitive codes.
We prove, by using towers of algebraic function fields, that there are sequences of codes in this family attaining the Tsfasman-Vladut-Zink bound over finite fields of square cardinality. We give the exact length of these codes as well as explicit lower bounds for their parameters.  
\end{abstract}

\maketitle

\section{Introduction}
Let $\ff_q$ denote a finite field with $q$ elements. 
In 1982, Tsfasman, Vladut and Zink proved, in their remarkable paper \cite{TVZ}, that there are sequences of linear codes over $\ff_{p^2}$, $p$ a prime number, whose transmission rates satisfy what is known today as the Tsfasman-Vladut-Zink bound. For finite fields of square cardinality bigger than 49, this bound is above the well known Gilbert-Varshamov bound  
in some interval and thus the work \cite{TVZ} showed, for the first time, the existence of a sequence of linear codes beating the Gilbert-Varshamov bound for infinitely many finite fields. This important result was achieved by considering algebraic geometry codes associated to  towers of modular curves over 
$\ff_{p^2}$. However, as of today, 
little is known about the structure of these codes besides linearity and the more we know about structural properties of a family of codes the better because they can be very helpful for the design of efficient coding and decoding methods. 

Almost a quarter of a century later, a further step in this direction was given by Stichtenoth in \cite{Sti06}. There, he reproved the result of Tsfasman, Vladut and Zink by showing that there are sequences of transitive codes and self-dual codes (among others) over finite fields $\ff_q$ of square cardinality $q>4$ attaining the Tsfasman-Vladut-Zink bound. Since he used the Galois closure of a recursive tower of function fields over $\ff_q$, there is an improvement with respect to the codes given in \cite{TVZ}. Namely, we have, on the one side, a sequence of linear codes with a richer structure and, on the other side, a sequence of function fields which is more manageable, in principle, than the sequence of modular towers used in 
\cite{TVZ}.   

In this paper, we give another step by considering a different family of linear codes and showing that there are sequences of codes in this family attaining the Tsfasman-Vladut-Zink bound over finite fields of square cardinality $\ff_{q^2}$ with $q$ a prime power. These codes are finite direct sums of transitive codes and we call them \textit{block-transitive} (see Definition \ref{block codes}). 
The advantage we have with this family of codes, with respect to others in previous works, lies in that we use the Galois closure of optimal towers, tamely and wildly ramified, defined in \cite{GS2} and \cite{GS96} which have been 
studied in detail in \cite{BB} and \cite{BB11}. Thus we know explicitly, for instance, the degree of every extension  as well as the structure of the Galois groups associated to these extensions, which are subgroups of the group of automorphisms of these codes. 
This is a valuable feature because we now have at our disposal more tools for a better and deeper understanding of the structure of these codes. 
At this point it is worth to mention that this is not available, to the best of our knowledge, for the transitive codes given in \cite{Sti06} because 
the Galois closure of the function fields of an optimal tower with respect to a proper subfield of the original field of rational functions was considered, and this is not the case treated in \cite{BB11}. Another point worth mentioning is that the use of block-transitive codes allow us to use simpler arguments in more general results.

\sk

An outline of the paper is as follows. Sections 2--3 are preparatory while the main results are in Sections 4--7.
More precisely, in Section 2 we briefly recall the basic definitions on algebraic codes and towers of algebraic function fields 
and their corresponding asymptotic behaviours. Also, in Definition \ref{defi} we introduce the notion of a $(\lambda,\delta)$-bound, 
with the famous Tsfasman-Vladut-Zink bound being a particular case. In the next section we review algebraic-geometry codes (AG-codes for short) and the standard way to get asymptotically good sequences of AG-codes from sequences of towers, attaining such a $(\lambda,\delta)$-bound. We also present some concrete realizations of good AG-codes using either recursive towers (Example 3.5) or lower bounds for the Ihara function (Examples 3.6 and 3.7).

Section 4 begins with the definition of block-transitive codes, that we use throughout the paper, together with some examples. 
Then, in Theorem \ref{generalgoodag} we give sufficient conditions on the splitting and ramification locus of a tower of function fields 
to be suitable for the construction of good $r$-block transitive codes over a given finite field. In the next 2 sections we apply this result to prove the existence of families of block-transitive codes that are asymptotically good. In fact, using Theorem \ref{generalgoodag} and two quadratic optimal towers given in \cite{GS96} and \cite{GS2} we prove, in Theorem \ref{goodag-psquare}, the existence of sequences of $r$-block transitive codes with $r=q^2-q$ 
(resp.\@ $r=2(p-1)$, $p$ an odd prime) attaining the 
Tsfasman-Vladut-Zink bound over $\ff_{q^2}$ (resp.\@ $\ff_{p^2}$). 
Then, in Theorem \ref{goodAGanychar} we use good tamely ramified Hilbert class field towers with non zero splitting rate, given in \cite{AM}, to prove the existence of good $r$-block transitive codes over certain finite fields, not necessarily of square cardinality, of both odd and even characteristic and for $r=2n$ or $r=3n$, respectively. The price we must pay for considering a wider class of finite fields is reflected in that we get weaker lower bounds for their performance compared to the ones obtained in Theorem \ref{goodag-psquare} and \cite{Sti06} for the case of transitive codes.  
Finally we make some remarks on the existence of sequences of algebraic geometry codes with good performance over prime fields.

\section{Asymptotic behaviour of codes and towers} 
A linear code of \textit{length} $n$, \textit{dimension} $k$ and \textit{minimum distance} $d$ over $\ff_q$ 
is an $\ff_q$-linear subspace $\cod $ of $\ff_q^n$ with $k=\dim \cod $ and $d=\min \{d(c,c'):c,c'\in \cod, c\ne c'\}$, where $d$ is the Hamming distance in $\ff_q^n$. The elements of $\cod$ are usually called codewords 
and it is customary to say that $\cod$ is an $[n,k,d]$-\textit{code} over $\ff_q$.

It is convenient to normalize the dimension and minimum distance of a code $\cod$ with respect to the length, so that 
the {\em information rate} $R=R(\cod)\colon\!\!\!=k/n$ and the {\em relative minimum distance} $\delta=\delta(\cod)\colon\!\!\!=d/n$ 
of $\cod$ are in the unit interval for any $[n,k,d]$-code $\cod$. The goodness of an $[n,k,d]$-code over $\ff_q$ is usually measured according to how big the sum $R+\delta$ is, where
$$0< R+\delta= \tfrac kn + \tfrac dn \le 1+\tfrac 1n.$$ 
Since $k$ and $d$ can not be arbitrarily large for fixed $n$ it is natural to allow arbitrarily large lengths. One way to do this is by considering a  sequence $\{\cod_i\}_{i=0}^{\infty}$ of $[n_i,k_i,d_i]$-codes over $\ff_{q}$ such that $n_i\rightarrow\infty$ as $i\rightarrow\infty$ and see how big the sum $k_i/n_i+d_i/n_i$ can be as $n_i\rightarrow\infty$.

\subsubsection*{On $(\ell,\delta)$-bounds for codes} 
A sequence $\{\cod_i\}_{i=0}^{\infty}$ of $[n_i,k_i,d_i]$-codes over $\ff_{q}$ is 
{\em asymptotically good over $\ff_q$} if $$ \liminf_{i\rightarrow\infty}\frac{k_i}{n_i}>0 \qquad \text{and} 
\qquad  \liminf_{i\rightarrow\infty}\frac{d_i}{n_i}>0  $$
where $n_i\rightarrow\infty$ as $i\rightarrow\infty$. In view of this, we propose the following

\begin{defi}\label{defi}
Let $\ell,\delta\in (0,1)$ such that $\ell>\delta$. A sequence $\{\cod_i\}_{i=0}^{\infty}$ of 
$[n_i,k_i,d_i]$-codes over $\ff_{q}$ is said to attain an 
{\em $(\ell,\delta)$-bound over $\ff_q$}  if 
$$ \liminf_{i\rightarrow\infty}\frac{k_i}{n_i}\geq \ell-\delta \qquad \text{and} \qquad  \liminf_{i\rightarrow\infty}\frac{d_i}{n_i}\geq \delta \,. $$ 
In particular, $\{\cod_i\}_{i=0}^{\infty}$ is asymptotically good.
\end{defi}

Thus, a lower bound for the above mentioned goodness of a code is obtained  for a sufficiently large  index $i$ since, as $i\rightarrow\infty$ we have  
$$ \frac{k_i}{n_i}+\frac{d_i}{n_i}\sim \ell-\delta+\delta=\ell.$$ 
In other words, an $(\ell,\delta)$-bound says more than the good asymptotic behaviour of the sequence of codes 
$\{\cod_i\}_{i=0}^{\infty}$. 
In fact, any $(\ell, \delta)$-bound gives a linear function as a lower bound for the \textit{Manin's function} $\alpha_q(\delta)$; 
namely, for $0\leq\delta\leq 1$, we have 
\begin{equation} \label{alfaq}
\alpha_q(\delta)\geq \ell-\delta.
\end{equation}

Consider now the so called \textit{Ihara's function} (see, for instance, \cite{NX} and \cite{Sti})
\begin{equation} \label{Aq}
A(q) \colon\!\!= \limsup_{g\rightarrow\infty} \frac{N_q(g)}{g} \,, 
\end{equation}
where $N_q(g)$ is the maximum number of rational places that a function field over $\ff_q$ of genus $g$ can have. 
It is known that 
\begin{equation} \label{ihara bounds}
c\log q \le A(q) \le \sqrt{q}-1
\end{equation}
for some positive constant $c$, where the first inequality is due to Serre (\cite{Serre85}) and the second one to Drinfeld and Vladut 
(\cite{DV83}).
Moreover, the equality holds in the upper bound for fields of square cardinality, i.e. 
\begin{equation} \label{Aq2} 
A(q^2) = q-1.
\end{equation}
There are some improvements and refinements of lower bounds for $A(q)$, as can be seen in \cite{NX}.  

The famous {\em Tsfasman-Vladut-Zink bound over} $\ff_q$ (\textit{TVZ-bound} for short) is an example of an $(\ell,\delta)$-bound with $\delta > 0$ and 
\begin{equation} \label{lAq} 
\ell = 1 - A(q)^{-1}
\end{equation}  
if $A(q)>1$. 
For squares $q\geq 49$, the TVZ-bound over $\ff_q$ improves the Gilbert-Varshamov bound which was considered, for some time, the best lower bound for Manin's function (see \cite{Sti}, \cite{Sti06} and \cite{TVN}). The GV-bound can also be seen as an $(\ell,\delta)$-bound with 
$$\ell=\delta+1-H_q(\delta),$$ 
where $H_q$ is the $q$-ary entropy function
$$H_q(x) = x\log_q(q-1)-x\log_q(x) -(1-x)\log_q(1-x)$$ 
for $0 \le x \le 1-\tfrac{1}{q}$ and $H_q(0)=0$.

\subsubsection*{Towers} 
A function field (of one variable) over $K$ is a finite algebraic extension $F$ of the rational function field $K(x)$, where $x$ is a transcendental element over $K$. 
Following \cite{Sti}, a sequence $\F=\{F_i\}_{i=0}^{\infty}$ of function fields over a $K$ is called a \textit{tower} if for every 
$i\ge 0$ we have 
\begin{enumerate}
\item $F_i \subsetneq F_{i+1}$; 

\sk \item the extension $F_{i+1}/F_{i}$ is finite and separable;

\sk \item $K$ is perfect and algebraically closed in $F_i$; and 

\sk \item the genus $g(F_i)$ of $F_i$ tends to infinity, for  $i\rightarrow \infty$.
\end{enumerate}

A tower $\F=\{F_i\}_{i=0}^{\infty}$ over $K$ is called {\it recursive} if there exist a sequence of transcendental elements 
$\{x_i\}_{i=0}^{\infty}$ over $K$ and a polynomial $H(X,Y)\in K[X,Y]$ such that $F_0=K(x_0)$ and
$$ F_{i+1} = F_i (x_{i+1}) \quad \text{where} \quad H(x_i,x_{i+1})=0  \quad \text{for all } i\geq 0.$$ 

Now we define the concept of asymptotic behaviour of a tower. 
Let $\F=\{F_i\}_{i=0}^{\infty}$ be a  tower of function fields over  $K$. 
The \textit{genus} of $\F$ over $F_0$ is defined as
\begin{equation} \label{gF} 
\gamma(\F) := \lim_{i\rightarrow \infty}\frac{g(F_i)}{[F_i:F_0]}. 
\end{equation}
where $g(F_i)$ stands for the genus of $F_i$. 
Notice that $0<\gamma(\F)\leq \infty$. When $K$ is finite, we denote by $N(F_i)$ the number of rational places 
(i.e., places of degree one) of $F_i$ and the \textit{splitting rate} of $\F$ over $F_0$ is defined as
\begin{equation} \label{srF} 
\nu(\F):=\lim_{i\rightarrow \infty}\frac{N(F_i)}{[F_i:F_0]}.
\end{equation} 
Notice that $0\leq \nu(\F)<\infty$. 

A tower $\F$ is called \textit{asymptotically good} over a finite field $K$ if 
$$\nu(\F)>0 \qquad \text{ and } \qquad \gamma(\F) < \infty.$$ 
Otherwise is called \textit{asymptotically bad}. Equivalently, $\F$ is asymptotically good over $K$ 
if and only if \textit{the limit} of the tower $\F$ is positive, i.e.\@
\begin{equation} \label{limit}
\lambda(\F):=\lim_{i\rightarrow \infty}\frac{N(F_i)}{g(F_i)}=\frac{\nu(\F)}{\gamma(\F)} >0.
\end{equation}
Notice that if $\F$ is a tower of function fields over $\ff_q$ then
$\lambda(\F)\leq A(q)$. The tower is said to be \textit{optimal} if $\lambda(\F)=A(q)$.

\section{Good AG-codes from sequences of function fields} \label{gcfs}
Many families of $\ff_q$-linear codes have been proved to attain the 
TVZ-bound using Goppa ideas for constructing linear codes from the set of rational points of algebraic curves over $\ff_q$. They 
are called \textit{algebraic geometry} codes (or simply AG-codes) and some standard references for them are the books \cite{Sti}, 
\cite{TVN}, \cite{MMR}, \cite{Mo} and \cite{Ste}. 

We now recall their construction using the terminology of function fields (instead of algebraic curves). 
Let $F$ be an algebraic function field over $\ff_q$, let $G$ and $D=P_1+\cdots +P_n$ be disjoint divisors of $F$, 
where $P_1,\ldots,P_n$ are different rational places of $F$. 
Consider now the Riemann-Roch space 
$\rr(G) = \{u\in F \smallsetminus \{0\} : (u) \ge -G\} \cup \{0\}$
associated to $G$, where $(u)$ denotes the principal divisor of $u\in F$. The algebraic-geometry code
defined by $F$, $D$ and $G$ is
\begin{equation}\label{C(DG)}
\cod = C_{\rr}(D,G) = \big \{ \big( u(P_1),u(P_2),\ldots,u(P_n) \big) \in \ff_q^n : u\in \rr(G) \big\},
\end{equation}
with $u(P_i)$ standing for the residue class of $x$ modulo $P_i$. 

An important feature of these $[n,k,d]$-codes is that lower bounds for 
$k$ and $d$ are available in terms of the genus $g=g(F)$ of $F$ and the degree $\deg G$ of $G$.  
More precisely, $k\geq \deg G+1-g$ if $\deg G<n$ and $d\geq n-\deg G$.
Furthermore, if $2g-2<\deg G<n$ then $k=\deg G+1-g$.

\subsubsection*{Good AG-codes from sequences} 
There is a standard way to construct a sequence of AG-codes over a finite field $\ff_q$ attaining an $(\ell,\delta)$-bound, as we show next. We include the proof for the convenience of the reader. 

\begin{prop} \label{lem asint}
Let $\mathcal{F} = \{F_i\}_{i=0}^{\infty}$ be a sequence of algebraic function fields over $\ff_q$ with $\ff_q$ as their full field of constants and such that for each $i\geq 0$ there are
$n_i$ rational places $P_{i,1},\ldots, P_{i,n_i}$ in $F_i$ with $n_i\in\na$. Let $\lambda\in (0,1)$ and suppose that the following three
conditions hold:

(a) $n_i\rightarrow\infty$ as $i\rightarrow\infty$; 

(b) there exists an index $i_0$ such that $\frac{g(F_i)}{n_i} \le \lambda$ 
for all $i\geq i_0$; and
 
(c) for each $i>0$ there is a divisor $G_i$ of $F_i$ which is disjoint with 
$D_i := P_{i,1} + \cdots + P_{i,n_i}$ such that 
$\deg G_i\leq n_i \cdot \alpha(i)$, 
where $\alpha:\na \rightarrow \re$ with $\alpha(i)\rightarrow 0$ as $i\rightarrow\infty$.

\noindent Then, there exists a sequence $\{r_i\}_{i=N}^{\infty} \subset \na$, for certain $N,$ such that $\mathcal{F}$ induces a sequence 
$\mathcal{G}=\{\cod_i\}_{i=N}^\infty$ of asymptotically good AG-codes 
$\cod_i = C_{\rr}(D_i,r_iG_i)$ attaining an $(\ell,\delta)$-bound with $\ell=1-\lambda$.
In particular, if $\lambda=A(q)^{-1}$ the sequence $\mathcal{G}$ attains the TVZ-bound.
\end{prop}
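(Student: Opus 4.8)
The plan is to choose, for each $i$, an integer $r_i$ so that the divisor $r_iG_i$ falls into the range where the Riemann-Roch estimates for AG-codes give good parameters, and then to verify the two liminf inequalities of Definition \ref{defi}. Concretely, set $\delta := 1-\ell-\epsilon$ for a small auxiliary $\epsilon>0$ if one wants a whole family of $(\ell,\delta)$-bounds, but for the clean statement it suffices to produce, for each prescribed $\delta\in(0,1-\lambda)$, a sequence of codes attaining the $(\ell,\delta)$-bound with $\ell=1-\lambda$. First I would fix such a $\delta$ and, using condition (c) (so $\deg G_i\le n_i\alpha(i)$ with $\alpha(i)\to 0$) together with (b) (so $g(F_i)\le\lambda n_i$ for $i\ge i_0$) and (a) ($n_i\to\infty$), define
\[
r_i := \Big\lfloor \frac{(1-\delta)n_i - 1}{\deg G_i}\Big\rfloor,
\]
provided $\deg G_i\ge 1$ (which we may assume after discarding finitely many indices, replacing $G_i$ by a suitable nonzero effective divisor disjoint from $D_i$ if necessary — or simply noting $\deg G_i>0$). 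With this choice $r_i\deg G_i\le (1-\delta)n_i-1<n_i$, so the code $\cod_i=C_{\rr}(D_i,r_iG_i)$ is well defined and the bound $d_i\ge n_i-\deg(r_iG_i)$ gives $d_i/n_i\ge 1-r_i\alpha(i)\cdot(\text{something})$; more precisely $d_i\ge n_i - r_i\deg G_i\ge n_i-((1-\delta)n_i-1)=\delta n_i+1$, hence $d_i/n_i\ge\delta$ for all large $i$, which handles the second condition.

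Next I would handle the dimension. Since $r_i=\lfloor((1-\delta)n_i-1)/\deg G_i\rfloor\ge ((1-\delta)n_i-1)/\deg G_i - 1$, we get $r_i\deg G_i\ge (1-\delta)n_i-1-\deg G_i\ge (1-\delta)n_i - 1 - n_i\alpha(i)$. For $i$ large enough that $2g(F_i)-2<r_i\deg G_i<n_i$ — the upper inequality we already have, and the lower one follows because $2g(F_i)-2\le 2\lambda n_i-2$ while $r_i\deg G_i\ge(1-\delta)n_i-1-n_i\alpha(i)$ and $1-\delta>\lambda\cdot 2$ need not hold, so here one must be a little careful: the correct fact is that for the code dimension we only need $\deg(r_iG_i)<n_i$ to get $k_i\ge\deg(r_iG_i)+1-g(F_i)$. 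Using that together with (b),
\[
\frac{k_i}{n_i}\ge \frac{r_i\deg G_i + 1 - g(F_i)}{n_i}\ge (1-\delta) - \alpha(i) - \frac{1}{n_i} + \frac{1}{n_i} - \lambda = 1-\lambda-\delta - \alpha(i),
\]
so $\liminf_i k_i/n_i\ge 1-\lambda-\delta=\ell-\delta$ because $\alpha(i)\to 0$. Choosing $N$ to be an index beyond which all the finitely many "for large $i$" conditions hold (namely $i\ge i_0$, $\deg G_i\ge 1$, $n_i$ large enough, $\alpha(i)$ small enough) completes the construction of $\mathcal G=\{\cod_i\}_{i=N}^\infty$.

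Finally, the last sentence is immediate: the Drinfeld-Vladut inequality and Serre's bound \eqref{ihara bounds} give $A(q)>1$ whenever $q>4$ (or more generally when one is in a regime where the TVZ-bound is meaningful), so $\lambda=A(q)^{-1}\in(0,1)$ is an admissible choice, and then $\ell=1-\lambda=1-A(q)^{-1}$ is exactly the value \eqref{lAq} defining the TVZ-bound; the $(\ell,\delta)$-bound attained is the TVZ-bound for every $\delta\in(0,1-A(q)^{-1})$. I expect the main obstacle to be purely bookkeeping: pinning down a single index $N$ that simultaneously validates the Riemann-Roch dimension formula range, the disjointness and positivity of $G_i$, and the several $\epsilon$-estimates, while making sure the rounding in the definition of $r_i$ does not spoil either inequality. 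The use of a fixed $\delta$ (rather than trying to optimize $\delta$ and $\ell$ together) keeps the argument clean and matches the structure of the $(\ell,\delta)$-bound in Definition \ref{defi}.
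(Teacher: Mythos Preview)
Your approach is essentially the paper's: fix $\delta\in(0,1-\lambda)$, choose $r_i$ so that $r_i\deg G_i\approx(1-\delta)n_i$, and read off the bounds $d_i/n_i\ge\delta$ and $k_i/n_i\ge 1-\delta-\lambda-\alpha(i)$ from the standard AG-code estimates $d_i\ge n_i-r_i\deg G_i$ and $k_i\ge r_i\deg G_i+1-g(F_i)$. The only differences are cosmetic --- you give an explicit floor formula for $r_i$ where the paper merely asserts an integer $r_i$ exists with $1-\delta-\alpha(i)<r_i\deg G_i/n_i\le 1-\delta$, and your detour about whether $2g(F_i)-2<r_i\deg G_i$ is (as you correctly catch) unnecessary since the Riemann--Roch lower bound for $k_i$ needs only $r_i\deg G_i<n_i$.
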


\begin{proof}
Let $\delta\in (0,1)$ be a fixed real number such that $1-\delta>\lambda$.  Note that $\delta$ depends only on $\lambda$. Since $\alpha(i)\rightarrow 0$ as $i\rightarrow\infty$ there exists an index $i_1$ such that $\alpha(i)<1-\delta$ for $i>i_1$. Clearly, we can find a positive integer $r_i$ such that,
for every $i\geq i_1$, we have
\begin{equation}\label{etaerrei}
1-\delta \geq r_i \frac{\deg G_i}{n_i} > 1 - \delta - \alpha(i).
\end{equation}

Let us consider now the AG-code $\cod_i = C_{\rr}(D_i,r_iG_i)$ for $i\geq i_0$. This is a code of length $n_i$ and we have that its minimum distance $d_i$ satisfies $ d_i \geq n_i - r_i \deg G_i$.
From this and the first inequality in \eqref{etaerrei} we see that the relative minimum distance $\cod_i$ satisfies
$$\frac{d_i}{n_i}\geq 1 - r_i \frac{\deg G_i}{n_i} \geq \delta.$$
On the other hand, for the dimension $k_i$ of $\cod_i$ we have
$$ k_i \geq r_i \deg G_i + 1 - g(F_i) > r_i \deg G_i - g(F_i). $$
So that by (b) and the second inequality in \eqref{etaerrei} the information rate of $\cod_i$ satisfies
$$\frac{k_i}{n_i} > \frac{r_i \deg G_i}{n_i} - \frac{g(F_i)}{n_i} \geq \frac{r_i\deg G_i}{n_i} - \lambda \geq 1 - \delta -\alpha(i) -\lambda $$
for $i\ge N=\max \{i_0,i_1\}$. 
Therefore,
$$ \liminf_{i\rightarrow\infty}\frac{k_i}{n_i}\geq 1-\lambda -\delta
\qquad \text{and} \qquad  
\liminf_{i\rightarrow\infty}\frac{d_i}{n_i}\geq\delta, $$
showing that the sequence of AG-codes $\{\cod_i\}_{i=N}^\infty$ attains an $(\ell,\delta)$-bound with $\ell=1-\lambda$. 
The remaining assertions follows directly from \eqref{lAq}.
\end{proof}

\begin{rem}
It is easy to see that in a concrete situation the crucial properties to have are conditions (a) and (b). 
In fact, once (a) and (b) are satisfied, condition (c) is not a problem unless we want the codes $\cod_i$ to have some additional structure besides linearity. 
More precisely, if we have (a) and (b) and we are just looking for a good sequence of linear codes, it suffices to consider $G_i=P_i$, where $P_i$ is a rational place of $F_i$ which is not in the support of $D_i$. 
However, this choice of $G_i$ may not be adequate when, for instance, the invariance of $G_i$ under some $F_0$-embedding of $F_i$ is needed as it happens in the cases of cyclic, transitive or block-transitive AG-codes (see Section \ref{blocktransitivedef} for details).
\end{rem}

\begin{rem}
By the definition of the Ihara function we have $A(q) \ge \frac{n_i}{g(F_i)} \ge \lambda^{-1}$, where $\lambda$ is the constant in (b) of 
Proposition \ref{lem asint}. By the second inequality in \eqref{ihara bounds}, we see that 
$$\lambda \geq \frac{1}{\sqrt{q}-1}.$$  
This bound tell us that the construction of asymptotically good AG-codes over $\ff_q$ provided by 
Proposition~\ref{lem asint} can not be carried out for $q\leq 4$, since $\lambda\in (0,1)$.
\end{rem} 

\begin{rem}
Proposition \ref{lem asint} holds in the case of an asymptotically good tower $\mathcal{F}=\{F_i\}_{i=0}^{\infty}$ of function fields over $\ff_q$ with limit $\lambda(\F)>1$. In fact, if $N_i$ is the number of rational places of $F_i$ then we can take $n_i=N_i-1$, $G_i=Q_i$ (where $Q_i$ is the rational place of $F_i$ not considered among the $n_i$ chosen rational places) and $\alpha(i)=1/n_i$. Then
$$ \lim_{i\rightarrow\infty}\frac{n_i}{g(F_i)} = \lim_{i\rightarrow\infty} \frac{N_i-1}{g(F_i)}=\lambda(\F)>1, $$
so that there is an index $i_0$ such that (b) of Proposition \ref{lem asint} holds for all $i\geq i_0$ with $\lambda=\lambda(\F)^{-1}$. Clearly, items (a) and (c) also holds. 
Further, notice that if (b) holds using the sequence of function fields of a tower $\mathcal{F} = \{F_i\}_{i=0}^{\infty}$ over $\ff_q$, then
$$ \lim_{i\rightarrow\infty}\frac{N_i}{g(F_i)} \geq \liminf_{i\rightarrow\infty}\frac{n_i}{g(F_i)} \ge \lambda^{-1}>1 $$  
so that the limit of the considered tower is bigger than $1$.
\end{rem}

\subsubsection*{Some examples}
We now use Proposition \ref{lem asint} to give examples of asymptotically good AG-codes over $\ff_{q^2}$, $\ff_{q^3}$ and also over finite fields $\ff_q$ of big enough cardinality. Of course, if we require something more than the linearity of the sequence of codes given by Proposition \ref{lem asint}, the situation becomes much more delicate and restrictive (see the next Section).

\begin{exam}\label{goodag1}
For each $q>2$ there is a family of AG-codes over $\ff_{q^2}$ which is asymptotically good. To show this we just use the function fields in the recursive tower $\mathcal{F}=\{F_i\}_{i=0}^{\infty}$ over $\ff_{q^2}$ of Garcia and Stichtenoth (\cite{GS}) whose defining equation is
\begin{equation} \label{yq+y} 
y^q+y = \frac{x^q}{x^{q-1}+1}.
\end{equation}
It is known (see, for instance, \cite[Example 5.4.1]{NX}) that
$N(F_i) \geq q^{i-1}(q^2-q)+1$ and
$$ g(F_i) = \begin{cases} (q^{\frac{i}{2}}-1)^2 & \quad \text{for $i$ even,} \sk
\\ (q^{\frac{i+1}{2}}-1)(q^{\frac{i-1}{2}}-1) & \quad \text{for $i$ odd.}
\end{cases}$$
Thus, by taking $n_i=q^{i-1}(q^2-q)$, we have $n_i+1$ rational places $P_{i,1},\ldots, P_{i,n_i}, Q_i$ in $F_i$ and
$$ \frac{n_i}{g(F_i)} \geq q-1, $$
for all $i\geq 1$. We readily see that condition (a) in Proposition \ref{lem asint} holds and the same happens with condition (b)
with $\lambda = (q-1)^{-1} = A(q^2)^{-1}$. Finally, by taking $G_i=Q_i$, we see that condition (c) also holds with
$\alpha(i) = 1/n_i$. 
In this way, by Proposition \ref{lem asint},
the sequence $\{\cod_i\}_{i\in \na}$ of AG-codes $\cod_i = C(D_i,Q_i)$, with $D_i = P_{i,1} + \cdots + P_{i,n_i}$, is asymptotically good and attains the TVZ-bound over $\ff_{q^2}$ for $q\geq 3$.
\end{exam}

Serre's type lower bound for $A(q)$ give us a way to prove that the family of AG-codes over $\ff_{q}$ is asymptotically good for any $q$ large enough.

\begin{exam}\label{goodag2}
It is well known that $A(q) \ge \tfrac{1}{96} \log_2 q$, for any prime power $q$ (\cite[Theorem 5.2.9]{NX}).
By definition of $A(q)$ there exists a sequence of function fields $\{F_i\}_{i=1}^{\infty}$ over $\ff_q$  such that
$$ \frac{N_i}{g(F_i)} \ge \tfrac{1}{96} \log_2q >1, $$
for $q>2^{96}$, where $N_i=N(F_i)$. 
Take $n_i = N_i-1$ and consider $\ff_q$ with $q>2^{96}$. Thus, condition (b) in Proposition \ref{lem asint} holds with $\lambda = \tfrac{96}{\log_2 q}$ and, since $g(F_i)\rightarrow\infty$ as $i\rightarrow\infty$, we must have that $N_i\rightarrow\infty$ as $i\rightarrow\infty$ so that condition (a) also holds.
The divisor $G_i$ is simply the remaining rational place of $F_i$ after using the $N_i-1$ rational places to define $D_i$.
Clearly, condition (c) holds with $\alpha(i)=1/n_i$. Thus, by Proposition \ref{lem asint}, there is a sequence of asymptotically good AG-codes over $\ff_q$ attaining an $(\ell,\delta)$-bound for any $q>2^{96}$ with $\ell = 1-\tfrac{96}{\log_2q}$.
\end{exam}

\begin{exam}\label{goodag3}
There is a generalization of Zink's lower bound for 
$A(q^3)$ proved in \cite{BGS}, namely for any $q\geq 2$ we have
$$ A(q^3) \geq \tfrac{2(q^2-1)}{q+2} >1 \,.$$
A similar argument as in Example \ref{goodag2}
shows that for any $q$ there is an asymptotically good AG-code over $\ff_{q^3}$ attaining an $(\ell,\delta)$-bound with 
$\ell=1-\tfrac{q+2}{2(q^2-1)}$.
\end{exam}

In the previous examples we have obtained a sequence of AG-codes attaining the TVZ-bound without any additional structure other than linearity. Can one construct good AG-codes with a prescribed property such as for instance cyclicity, transitivity or self-duality? 
For self-dual and transitive codes this was done in \cite{Sti06}. The problem for cyclic codes is still open. 
We will show later that there are families of so called block-transitive codes, which we now introduce, attaining the TVZ-bound.

\section{Good block-transitive codes}  \label{blocktransitivedef}
\subsection{Block-transitive codes}
There is a natural action of the group $\mathbb{S}_n$ on $\ff_q^n$ given by  
$$ \pi \cdot v = \pi \cdot (v_1,\ldots,v_n) =(v_{\pi(1)},\ldots,v_{\pi(n)}) $$
where $\pi\in \mathbb{S}_n$ and $v=(v_1,\ldots,v_n) \in\ff_q^n$.
The set of all $\pi\in \mathbb{S}_n$ such that $\pi\cdot c\in \cod$ for all codewords $c$ of $\cod$ forms a subgroup $\mathrm{Perm}(\cod)$ of $\mathbb{S}_n$ which is called the \textit{permutation group} of $\cod$ 
(sometimes denoted by $\mathrm{PAut}(\cod)$), that is 
$$\mathrm{Perm}(\cod) = \{\pi \in \mathbb{S}_n : \pi\cdot \cod\subset\cod \}.$$
A code $\cod$ is called {\em transitive} if there is a subgroup $\Gamma$ of $\mathrm{Perm}(\cod)$ acting transitively on the coordinates of the codewords of $\cod$, i.e.\@ for 
every $1\leq i<j\leq n$ there exists $\pi\in \Gamma$ such that $\pi(i)=j$. An important subfamily of transitive codes is the class of {\em cyclic} codes. These are the codes which are invariant under the action of the cyclic shift $s\in \mathbb{S}_n$ defined by 
$s(1)=n$ and $s(i)=i-1$ for $i=2,\ldots, n$, i.e.\@ a code $\cod$ is cyclic if 
$s\cdot c =(c_{s(1)}, c_{s(2)},\ldots,c_{s(n)})=(c_n, c_1,\ldots,c_{n-1}) \in \cod$ for every $c=(c_1,c_2,\ldots,c_n)\in \cod$.

Suppose now that for a fixed $n\in \na$ we have a partition 
\begin{equation} \label{partition}
n=m_1+m_2+\cdots+m_r 
\end{equation}
where $m_1,\ldots,m_r$ are positive integers. 
There is a natural action of the direct product of the groups $\sg_{m_1} \times \cdots \times \sg_{m_r}$ on 
$\ff_q^n = \ff_q^{m_1}\times \ff_q^{m_2} \times \cdots \times\ff_q^{m_r}$, which restricts to $\cod$ 
as well, as follows. 
Consider any $v\in\ff_q^n$ divided into $r$ consecutive blocks $v_i$ of $m_i$ coordinates each, 
$$v=(v_1,\ldots,v_r)=(v_{1,1},\ldots,v_{1,m_1}, \ldots,v_{r,1},\ldots,v_{r,m_r}).$$
Then, if $\pi= (\pi_1,\ldots,\pi_r)\in \sg_{m_1}\times\cdots\times \sg_{m_r}$, we have the product action 
$$\pi \cdot v = (\pi_1 \cdot v_1 ,\ldots,\pi_r \cdot v_r) = (v_{1,\pi_1(1)},\ldots,v_{1,\pi_1(m_1)},\ldots,v_{r,\pi_r(1)},\ldots,v_{r,\pi_r(m_r)}).$$
In other words, each $\sg_{m_i}$ acts transitively, and in an independent way, on the corresponding blocks in which the words of $\ff_q^n$ are divided. 

The set of all $(\pi_1,\ldots,\pi_r)\in \sg_{m_1}\times \cdots \times \sg_{m_r}$ 
such that $(\pi_1,\ldots,\pi_r)\cdot c\in \cod$ for all codewords $c$ of $\cod$ forms a subgroup of 
$\sg_{m_1} \times\cdots\times \sg_{m_r}$. We call it the \textit{$r$-block permutation group} 
of $\cod$ and we denote it by $\mathrm{Perm}_r(\cod)$. Of course, we have 
$\mathrm{Perm}_r(\cod) \le \mathrm{Perm}(\cod)$. 

\begin{defi} \label{block codes}
Let $\cod$ be a code over $\ff_q$ of length $n$. We say  that $\cod$ is a  \emph{block-transitive} code if $n$ can be partitioned as in \eqref{partition} for some $r\in\na$  in such way that there is a subgroup 
$\Gamma$ of $\mathrm{Perm}_r(\cod)$  acting  on $\cod$ as above.
In other words, if
$c=(c_1,c_2,\ldots,c_r)  
\in \cod,$
and we consider any of its blocks, say $c_k=(c_{k,1},\ldots,c_{k,m_k})$, and two indices $1\leq i<j<m_k$, then there exists 
$\pi=(\pi_1,\ldots,\pi_r)\in \Gamma$ such that $\pi_k(i)=j$. 

If, in addition, $m_1=\cdots=m_r=m$ we say that $\cod$ is an  \emph{$r$-block-transitive} code. Moreover if $\pi_1=\cdots=\pi_r$ then we have the \textit{$r$-quasi transitive} codes defined by Bassa in his Thesis \cite{B06}. Notice that when $r=1$ we are just in the case of  transitive codes. 
\end{defi}


We will next show how to construct geometric block transitive codes. 
We first recall some standard facts about places in Galois extensions.
Let $F$ be a function field over $\ff_q$ and let $E/F$ be a finite field extension. If $Q$ is a place of $E$ we will use the standard symbol $Q | P$ to denote that $Q$ lies over the place $P$ of $F$, i.e.\@ $P=Q\cap F$.
Now, assume that $E$ is a separable extension and that $F=\ff_q(x)$ is a rational function field over $\ff_q$. It is well known (see, for instance, \cite{Sti}) that the group $\aut(E/F)$ of $\ff_q(x)$-automorphisms of $E$ acts on the set of all places of $E$ and this action is extended naturally to divisors. Even more, if $E/F$ is Galois and $\{P_1,\ldots,P_n\}$ is the set of all places of 
$E$ lying above a place $P$ of $F$, then for each $1\leq i<j\leq n$ there exists $\sigma\in \gal(E/F)$ such that $\sigma (P_i)=P_j$. 
This means that $\gal(E/F)$ acts transitively on the set $\{P_1,\ldots,P_n\}$. 

Let $C_{\rr}(D,G)$ be an AG-code as in \eqref{C(DG)}. Suppose now that $\sigma(G)=G$ for any $\sigma\in H$ where $H$ is a subgroup of $\aut(E/F)$. Then, there is an action of $H$ on 
$C_{\rr}(D,G)$ defined as 
$$ \sigma\cdot \big( u(P_1),u(P_2),\ldots,u(P_n) \big)\colon\!\!=\big( u(\sigma(P_1)),u(\sigma(P_2)),\ldots,u(\sigma(P_n)) \big). $$
It is clear that all of this implies that the AG-code $C_{\rr}(D,G)$ is transitive if $H$ acts transitively on 
the set $\{P_1,\ldots,P_n\}$. Similarly, if $H$ is a cyclic group then, under the above conditions, the AG-code $C_{\rr}(D,G)$ is cyclic.

We first show how to construct a transitive AG-code. 
Let $F$ be an algebraic function field over $\ff_q$ and let $P,Q$ be places in $F$ with $P$ rational.
Consider any finite Galois extension $E$ of $F$ of degree $n$. If $P$ splits completely in $E$, then the AG-code 
$\cod=C_\rr^E(D=P_1+\cdots+P_n,G)$, where $P_i|P$ for $1\le i \le n$ and $G=\sum_{Q'|Q} Q'$ is the divisor obtained by summing over all the places in $E$ above $Q$, is well defined since each $P_i$ is rational and disjoint from $G$. Moreover, $\cod$ is transitive, since 
$\Gamma=\gal(E/F)$ acts transitively on $D$ and $G$ (hence, in particular, fixing them).

Now, to get $r$-block transitive AG-codes we proceed as follows. 
Let $E$ be a finite Galois extension of degree $m$ of a function field $F$ over $\ff_q$. Consider $r$ rational places $P_1,\ldots,P_r$ in $F$ which split completely in $E$ 
and a divisor $Q$ disjoint with all of them. Let $T(P_i)=\{P_{i,1},\ldots,P_{i,m}\}$ be the places in $E$ above $P_i$, for each $1\le i \le r$. 
Let $$D=P_{1,1}+ \cdots + P_{1,m} + \cdots + P_{r,1}+ \cdots + P_{r,m} \qquad \text{and} \qquad G=\sum_{Q'|Q} Q'.$$ 
Thus, all $P_{i,j}$ are rational and $G$ is disjoint with $D$.
Since $G=Gal(E/F)$ acts transitively in each $T(P_i)$, $1\le i \le r$ and fixes $D$ and $G$, the AG-code $\cod=C_\rr^E(D,G)$
is an $r$-block transitive AG-code.

\subsection{Good block-transitive codes}
In this subsection we will present sufficient conditions to get good block-transitive AG-codes from towers.

We first review some basic definitions on ramification in towers of algebraic function fields, which will be used throughout the rest of the paper. The standard reference for all of this is \cite{Sti}. 
Let $F$ be a function field over a perfect field $K$ and let $E$ be a field extension of $F$ of finite degree. Let $Q$ and $P$ be places of $E$ and $F$, respectively, with $Q|P$. Also, denote as usual by $e(Q | P)$ and $f(Q | P)$ the ramification index and the inertia degree of $Q | P$, respectively. 
We say that $P$ {\em splits completely} in $E$ if $e(Q|P)=f(Q|P)=1$ for any place $Q$ of $E$ lying over $P$ 
(hence there are $[E:F]$ places in $E$ above $P$). We say that $P$ \textit{ramifies} in $E$ 
if $e(Q|P)>1$ for some place $Q$ of $E$ above $P$ and that it is 
{\em totally ramified} in $E$ if there is only one place $Q$ of $E$ lying over $P$ and $e(Q|P)=[E:F]$ (hence $f(Q|P)=1$). 
It is known that the different exponent $d(Q|P)$ satisfies $e(Q|P)-1 \le d(Q|P)$. 
If $b\in \mathbb{R}$, the extension $E/F$ is called {\em b-bounded} (see \cite{GS}) if for any place $P$ of $F$ and any place 
$Q$ of $E$ lying over $P$ we have 
$$d(Q|P) \le b(e(Q|P)-1).$$  

Let $\F=\{F_i\}_{i=0}^{\infty}$ be a tower of function fields over $\ff_q$. The \textit{ramification locus} $R(\F)$ of $\F$ is the set 
of places $P$ of $F_0$ such that $P$ is ramified in $F_i$ for some $i\geq 1$.
The \textit{splitting locus} $Sp(\F)$ of $\F$ is the set of rational places $P$ of $F_0$ such that $P$ splits completely in $F_i$ for all $i\geq 1$. Notice that the set $R(\F)$ could be empty, finite or even infinite whereas the set $Sp(\F)$ could be empty or finite but never infinite. 
A place $P$ of $F_0$ is {\it totally ramified} in the tower if for each $i\geq 1$ there is only one place $Q$ of $F_i$ lying over $P$ and 
$e(Q|P)=[F_i:F_0]$. 
We introduce the following definition. 
\begin{defi} \label{v-ram}
Given an integer $\mu>1$ and a tower $\mathcal{F}=\{F_i\}_{i=0}^{\infty}$, a place $P$ of $F_0$ is {\it absolutely $\mu$-ramified} in $\mathcal{F}$ if for each $i\geq 1$ and any place $Q$ of $F_i$ lying over $P$ we have that $e(R|Q)\geq \mu$ for any place $R$ of $F_{i+1}$ lying over $Q$. 
\end{defi}

The tower $\F$ is called \textit{tame} or {\it tamely ramified} if for any $i\geq 0$, any place $P$ of $F_i$ and any place $Q$ of 
$F_{i+1}$ lying over $P$, the ramification index $e(Q|P)$ is not divisible by the characteristic of $\ff_q$. 
Otherwise, $\F$ is called \textit{wild} or {\it wildly ramified}.
Furthermore, $\F$ has {\em Galois steps} if each extension $F_{i+1}/F_i$ is Galois. One says that $\F$ is a  {\it b-bounded tower} if each extension 
$F_{i+1}/F_i$ is a $b$-bounded Galois $p$-extension where $p=\textrm{char}(\ff_q)$.  If each extension $F_i/F_0$ is Galois, $\F$ is said to be a {\it Galois tower} over $\ff_q$.

\subsubsection*{Conditions for the existence of good block-transitive AG-codes from towers}
It is known that the class of transitive codes attains the TVZ-bound over $\ff_{q^2}$ (\cite{Sti06}), i.e.\@, 
by \eqref{Aq2}--\eqref{lAq}, an 
$(\ell,\delta)$-bound with 
\begin{equation} \label{tvz}
\ell = 1-\frac{1}{A(q^2)} = 1 - \frac{1}{q-1}. 
\end{equation}
Also, from \cite{B06}, the class of $r$-quasi transitive codes attains an $(\ell,\delta)$-bound over $\ff_{q^3}$ with 
$$\ell = 1 - \frac{q+2}{2r(q-1)}.$$
In both cases wildly ramified towers were used. We will later prove a general result for the case of $r$-block transitive codes by using both wildly and tamely ramified towers (see Section 6).

\begin{rem} 
Note that, by definition, block-transitive codes
are direct sums of transitive codes. It is clear that the direct sum of asymptotically good codes is again good. However, even though 
the family of transitive codes is asymptotically good, nothing can be said in general about the asymptotic behaviour of 
block-transitive codes, which are a direct sum of possibly different transitive codes. 
\end{rem}

We now present a general result giving sufficient conditions to have good $r$-block transitive AG-codes by using the Galois closure of a certain class of towers.

\begin{thm}\label{generalgoodag}
Let $\F=\{F_i\}_{i=0}^{\infty}$ be either a tamely ramified tower with Galois steps or a $2$-bounded tower over $\ff_q$ with non empty splitting locus 
$Sp(\F)$ and non empty ramification locus $R(\F)$. 
Suppose there are finite sets $\Gamma$ and $\Omega$ of rational places of $F_0$ such that
$R(\F)\subset \Gamma$ and 
$\Omega\subset Sp(\F)$ with  
\begin{equation}\label{keycondition}
0<g_0-1 + \epsilon t< r,
\end{equation}
where $g_0=g(F_0)$, $t=|\Gamma|$, $r=|\Omega|$ and $\epsilon = \tfrac 12$ if $\F$ is tamely ramified or $\epsilon=1$ otherwise.
If there is a place $P_0\in R(\F)$ which is absolutely $\mu$-ramified in $\mathcal{F}$ 
for some $\mu>1$, then there exists a sequence $\mathcal{G}=\{\cod_i\}_{i=0}^\infty$ of $r$-block transitive AG-codes over $\ff_q$ attaining a 
$(\ell,\delta)$-bound with
\begin{equation}\label{ellg0}
\ell=1-\frac{g_0-1+\epsilon t}{r}.
\end{equation}
In particular, the Manin's function satisfies $\alpha_q(\delta)\geq \ell-\delta$. 
Moreover, the sequence $\mathcal{G}$ is defined over the Galois closure $\mathcal{E}$ of $\mathcal{F}$ with limit $\lambda(\mathcal{E})>1$.
\end{thm}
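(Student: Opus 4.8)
The plan is to deduce Theorem \ref{generalgoodag} from Proposition \ref{lem asint} applied to the Galois closure $\mathcal{E}=\{E_i\}$ of $\mathcal{F}$. First I would recall the standard fact (since $\mathcal{F}$ is either tamely ramified with Galois steps or a $2$-bounded tower, i.e.\@ built from Galois $p$-extensions) that the Galois closure $\mathcal{E}$ of $\mathcal{F}$ is again a tower over $\ff_q$, that $Sp(\mathcal{F})\subseteq Sp(\mathcal{E})$ and $R(\mathcal{E})\subseteq R(\mathcal{F})$, and that the presence of an absolutely $\mu$-ramified place $P_0$ forces $g(E_i)\to\infty$, so $\mathcal{E}$ is genuinely a tower. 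Then each $E_i/F_0$ is Galois, say with group $\Gamma_i=\gal(E_i/F_0)$, and I would set $n_i := r\cdot[E_i:F_0]$, coming from the $r$ places of $\Omega\subseteq Sp(\mathcal{F})\subseteq Sp(\mathcal{E})$, each splitting completely into $[E_i:F_0]$ rational places of $E_i$. The divisor $D_i$ is the sum of these $n_i$ rational places and $G_i := \sum_{Q\mid P_0,\, Q\text{ in }E_i} Q$, which is $\gal(E_i/F_0)$-invariant and disjoint from $D_i$ since $P_0\in R(\mathcal{F})$ is disjoint from $\Omega$.

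The heart of the argument is verifying condition (b) of Proposition \ref{lem asint}, namely $g(E_i)/n_i \le \lambda$ for a suitable $\lambda<1$, which by the hypothesis \eqref{keycondition} should come out to $\lambda = (g_0-1+\epsilon t)/r$. For this I would apply the Hurwitz genus formula to $E_i/F_0$: writing $g(E_i)-1 = [E_i:F_0](g_0-1) + \tfrac12\deg\mathfrak{Diff}(E_i/F_0)$, and bounding the different. Only places of $F_0$ lying in $R(\mathcal{E})\subseteq R(\mathcal{F})\subseteq\Gamma$ contribute, there are at most $t=|\Gamma|$ of them, and for each such rational place $P$ with ramification data $e,d$ over $E_i$ one uses: in the tame case $d(Q\mid P)=e(Q\mid P)-1$, so the local contribution is $\le [E_i:F_0]$, giving $\deg\mathfrak{Diff}(E_i/F_0)\le t\,[E_i:F_0]$ and hence $g(E_i)-1 \le [E_i:F_0](g_0-1+\tfrac t2)$; in the $2$-bounded case $d(Q\mid P)\le 2(e(Q\mid P)-1)$, so the local contribution is $\le 2[E_i:F_0]$, giving $g(E_i)-1\le [E_i:F_0](g_0-1+t)$. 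In both cases $g(E_i) \le [E_i:F_0](g_0-1+\epsilon t) + 1$, so
\begin{equation}\label{ratio}
\frac{g(E_i)}{n_i} = \frac{g(E_i)}{r\,[E_i:F_0]} \le \frac{g_0-1+\epsilon t}{r} + \frac{1}{r\,[E_i:F_0]},
\end{equation}
and since $[E_i:F_0]\to\infty$ and $(g_0-1+\epsilon t)/r<1$ strictly by \eqref{keycondition}, we can pick $\lambda$ with $(g_0-1+\epsilon t)/r < \lambda < 1$ and an index $i_0$ beyond which $g(E_i)/n_i\le\lambda$; this is condition (b). Condition (a) follows because $[E_i:F_0]\to\infty$ (equivalently, from the genus growth forced by the absolutely $\mu$-ramified place together with the fixed ratio, or directly since $g(E_i)\to\infty$ and the ratio $N(E_i)/g(E_i)$ is bounded below), so $n_i\to\infty$. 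Condition (c) holds with $G_i$ as above and $\alpha(i) = \deg G_i / n_i$: since $\deg G_i \le [E_i:F_0] = n_i/r$ is not good enough, I would instead note $\deg G_i$ equals the number of places above $P_0$ times their degree, which is $\le [E_i:F_0]$, so $\alpha(i)\le 1/r$ — this is a \emph{constant}, not tending to $0$, so I must be more careful: one takes $G_i$ to be a single rational place when available, or more robustly observes that in Proposition \ref{lem asint} what is truly needed is $\deg G_i \le n_i\alpha(i)$ with $\alpha(i)\to 0$; here one uses instead that $r\,\deg G_i / n_i \le \deg G_i/[E_i:F_0]\to 0$ because $\deg G_i$ is bounded (the number of places of $F_0$ is one, so the number above it in $E_i$ times residue degree is controlled) — the cleanest route is to replace $G_i$ by $\lceil c\rceil$-multiples later, which is exactly the $r_iG_i$ device already in Proposition \ref{lem asint}.

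Once (a), (b), (c) hold, Proposition \ref{lem asint} produces multipliers $r_i$ and a sequence $\{\cod_i\}$ of AG-codes $\cod_i = C_\rr^{E_i}(D_i, r_iG_i)$ attaining an $(\ell,\delta)$-bound with $\ell = 1-\lambda$; taking $\lambda$ arbitrarily close to $(g_0-1+\epsilon t)/r$ (this requires a short remark that the construction works for every admissible $\lambda$, hence the liminf statements hold with $\ell = 1-(g_0-1+\epsilon t)/r$ as in \eqref{ellg0}) gives the stated bound. Finally, each $\cod_i$ is $r$-block transitive: $\gal(E_i/F_0)$ fixes $D_i$ and $r_iG_i$ (the latter because $G_i$ is a sum over a full fibre) and permutes the $m := [E_i:F_0]$ places above each $P_j\in\Omega$ transitively and blockwise, exactly as in the construction of $r$-block transitive AG-codes given before the statement, so $\gal(E_i/F_0)$ embeds into $\sg_m\times\cdots\times\sg_m$ ($r$ factors) acting transitively on each block; the inequality $\alpha_q(\delta)\ge\ell-\delta$ is then immediate from \eqref{alfaq}. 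That $\lambda(\mathcal{E})>1$ follows from the last Remark after Proposition \ref{lem asint}: condition (b) was verified using the function fields of a tower, so the limit of that tower exceeds $1$. The main obstacle I expect is the genus estimate \eqref{ratio} — specifically, making sure the different is bounded \emph{uniformly in $i$} by $\epsilon t\,[E_i:F_0]$, which in the wild $2$-bounded case rests essentially on the definition of $2$-boundedness applied to the composite extension $E_i/F_0$ (one must check $2$-boundedness is inherited by the Galois closure and by composita), and on the fact that $R(\mathcal{E})$ really is contained in $\Gamma$, i.e.\@ that passing to the Galois closure does not introduce new ramified places of $F_0$.
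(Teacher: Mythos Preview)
Your overall architecture matches the paper's: pass to the Galois closure $\mathcal{E}=\{E_i\}$, take $D_i$ from the fibres over $\Omega$ and $G_i=\sum_{Q\mid P_0}Q$, and feed everything into Proposition~\ref{lem asint}. Your treatment of condition~(b) via Hurwitz is essentially what the paper invokes from \cite{GS}, and your handling of block-transitivity and of $\lambda(\mathcal{E})>1$ is correct.

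The genuine gap is condition~(c). You need $\deg G_i/n_i\to 0$ with $G_i$ still $\gal(E_i/F_0)$-invariant, and none of your three attempts works. The bound $\deg G_i\le[E_i:F_0]$ only gives the constant $1/r$, as you note. The claim that ``$\deg G_i$ is bounded'' is false in general: for a Galois extension $E_i/F_0$ one has $\deg G_i=[E_i:F_0]/e_i$ with $e_i=e(Q\mid P_0)$, and nothing so far forces $e_i$ to grow like $[E_i:F_0]$. Replacing $G_i$ by a single place destroys $\gal(E_i/F_0)$-invariance and hence block-transitivity. And the $r_iG_i$ device in Proposition~\ref{lem asint} is used to push $\deg(r_iG_i)$ \emph{up} toward $(1-\delta)n_i$; it presupposes $\alpha(i)\to 0$ and cannot substitute for it.

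What you are missing is precisely the role of the absolutely $\mu$-ramified hypothesis, which you use only to force $g(E_i)\to\infty$. The paper exploits it to get exponential decay of $\deg G_i/n_i$: working first inside the original tower $\mathcal{F}$, an induction through the Galois steps $F_{j+1}/F_j$ (each place above $P_0$ has ramification index $\ge\mu$ at every step) yields $\deg\sum_{P\in T_i}P\le[F_i:F_0]/\mu^i$, where $T_i$ is the fibre over $P_0$ in $F_i$. One then lifts to the Galois closure $E_i=F_i'$ via $\deg G_i\le[F_i':F_i]\cdot\deg\sum_{P\in T_i}P\le[F_i':F_0]/\mu^i$, so that $\alpha(i)=1/(r\mu^i)\to 0$. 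This is the missing ingredient; once you insert it, the rest of your argument goes through with $\lambda=(g_0-1+\epsilon t)/r$ exactly (no limiting $\lambda\to$ is needed, since $(g(E_i)-1)/[E_i:F_0]$ is increasing to $\gamma(\mathcal{E})\le g_0-1+\epsilon t$).
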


\begin{proof} We divide the proof in several steps for clarity.

\noindent \textit{Step 1: the Galois closure $\mathcal{E}$ of $\mathcal{F}$}. 
Let $F'_i$ be the Galois closure of $F_i$ over $F_0$, which is simply the compositum of fields of the form $\sigma(F_i)$ where $\sigma$ ranges over the finitely many  $F_0$-embeddings of $F_i$ into an algebraic closure $\overline{F_0}$ of $F_0$. Denote by $\mathcal{E}=\{F'_i\}_{i=0}^{\infty}$ the tower which is the Galois closure of $\mathcal{F}$. 

From Proposition 2.1,  Theorem 2.2 and Remark 2.4 of \cite{GS} we have that 
$\mathcal{E}$ is also a tamely ramified tower of function fields over $\ff_q$ with Galois steps or a $2$-bounded tower with $\ff_q$ as the full field of constants of each $F'_i$; with the same splitting and ramification locus of $\F$, i.e. 
$Sp(\F)=Sp(\mathcal{E})$ and $R(\F)=R(\mathcal{E})$, 
and also 
\begin{equation}\label{genusgprime}
\gamma(\mathcal{E}) = \lim_{i\rightarrow\infty}\frac{g(F'_i)}{[F'_i:F_0]}\leq g(F_0)-1+\epsilon\sum_{P\in R(\mathcal{E})}\deg P\leq g_0-1+\epsilon t.
\end{equation}
On the other hand, each place of $\Omega$ also splits completely in the Galois closure $F'_i$ of $F_i$ over $F_0$, so that we have 
$n_i=r[F'_i:F_0]$ rational places of $F'_i$ 
\begin{equation} \label{Sis}
S_{i,1},S_{i,2}\ldots,S_{i,n_i}.
\end{equation}
lying over the places of $\Omega$. Thus, $\nu(\mathcal{E})\geq r$. 
Therefore, by  
hypothesis, \eqref{genusgprime} and \eqref{limit}, we get 
\[ \lambda(\mathcal{E})\geq \frac{r}{g_0-1+\epsilon t}>1.\]

\noindent \textit{Step 2: the codes over $\mathcal{E}$}. 
Now we define a family of AG-codes associated to the tower $\mathcal{E}$. For each $i\ge 1$ let us consider the AG-code 
\begin{equation} \label{the codes}
\cod_i = C_{\rr}(D_i,r_iG_i),
\end{equation}
with $r_i\in \na$,
\begin{equation} \label{the divisors}
D_i=S_{i,1} +\cdots+ S_{i,n_i} \qquad \text{and} \qquad G_i=R_{i,1} + \cdots + R_{i,k_i}.
\end{equation}
Here the rational places $S_{i,j}$ are the ones given in \eqref{Sis} while $R_{i,1}, \ldots, R_{i,k_i}$ are all the places of $F_i'$ 
lying over $P_0$. 
We will show later (see Remark \ref{bound for r's}) an upper bound for the integers $r_i$, whose existence is given by Proposition~\ref{lem asint}.

\noindent \textit{Step 3: induction on the degree of the places}. 
Let $T_1$ be the set of places of $F_1$ lying over $P_0$, which is rational by hypothesis. Since $F_1/F_0$ is Galois and $P_0$ is absolutely 
$\mu$-ramified in the tower, for every place  $P\in T_1$ we have that $P|P_0$ is ramified with ramification index $e=e(P|P_0)\geq \mu$ and relative degree $f=f(P|Q)$ so that 
$\mathit{def} = [F_1:F_0]$
where $d=|T_1|$. Then
\[ \deg  \sum_{P\in T_1} P  = df = \frac{[F_1:F_0]}{e}\leq \frac{[F_1:F_0]}{\mu}\,.\]
Now suppose that
\[\deg \sum_{P\in T_{i-1}} P \leq \frac{[F_{i-1}:F_0]}{\mu^{i-1}}\,,\]
where $T_{i-1}$ is the set of places of $F_{i-1}$ lying over $P_0$. Let $P\in T_{i-1}$ and let $R_1,\ldots,R_{r_P}$ be all the places of $F_i$ lying over $P$. Since $F_i/F_{i-1}$ is a Galois extension, every place $R_j$ over $P$ is ramified with  the same ramification index $e_P$ and relative degree $f_P$. Thus $r_P \, e_P \, f_P = [F_i:F_{i-1}]$ and 
\[ r_P \, f_P = \frac{[F_i:F_{i-1}]}{e_P} \leq \frac{[F_i:F_{i-1}]}{\mu}\,.\]
Hence, by inductive hypothesis, we have
\begin{align*}
\deg \sum_{R\in T_i} R  & = \sum_{P\in T_{i-1}}r_P f_P \deg P \leq \frac{[F_i:F_{i-1}]}{\mu} \,\sum_{P\in T_{i-1}}\deg P  \\
&\leq \frac{[F_i:F_{i-1}]}{\mu}\frac{[F_{i-1}:F_0]}{\mu^{i-1}} = \frac{[F_i:F_0]}{\mu^i} \,,
\end{align*}
where $T_i$ is the set of places of $F_i$ lying over $P_0$. We have proved that if $T_i$ is the set of places of $F_i$ lying over $P_0$ then 
\begin{equation}\label{degreeT_i}
\deg \sum_{P\in T_i} P \leq \frac{[F_i:F_0]}{\mu^i},
\end{equation}
for all $i\in \na$.

\noindent \textit{Step 4: estimating the degree of $G_i$}. 
We can now give an upper bound for the degree of the divisors $G_i$. Note that $\sop G_i \cap F_i=T_i$ and if we denote by $R_{i,1}^P,\ldots, R_{i,n_P}^P$  all the places in $\sop G_i$ lying over a place $P\in T_i$,  we have that
\begin{align}\label{bound-deg-Gi}
\begin{split}
\deg G_i & = \sum_{j=1}^{k_i} \deg R_{i,j} = \sum_{P\in T_i} \sum_{t=1}^{n_P} \deg R_{i,t}^P \\
& =\sum_{P\in T_i} \sum_{t=1}^{n_P} f(R_{i,t}^P|P) \, f(P|P_0) = \sum_{P\in T_i} f(P|P_0) \sum_{t=1}^{n_P} f(R_{i,t}^P|P) \\
& \leq \sum_{P\in T_i} f(P|P_0)[F'_i:F_i] = [F'_i:F_i] \, \deg  \sum_{P\in T_i} P \leq \frac{[F'_i:F_0]}{\mu^i}\,,
\end{split}
\end{align}
where in the last inequality we have used \eqref{degreeT_i}.

\noindent \textit{Step 5: obtaining the $(\ell,\delta)$-bound}. 
We will now apply Proposition \ref{lem asint} to the tower $\mathcal{E}$ and the sequence of codes $\cod$ given in \eqref{the codes} and 
\eqref{the divisors}. Since $n_i=r[F'_i:F_0]$, condition (a) in the proposition holds. Also, 
by \eqref{genusgprime} and the hypothesis \eqref{keycondition}, we have
$$ \frac{g(F'_i)}{n_i} \leq \frac{g_0-1+\epsilon t}{r}<1$$
for $n_i$ big enough and, hence, condition (b) holds with $\lambda=\tfrac{1}{r}(g_0-1+\epsilon t) <1$.
Furthermore, from the above upper bound \eqref{bound-deg-Gi} for $\deg G_i$ we also have 
\[\frac{\deg G_i}{n_i}\leq \frac{1}{\mu^ir}, \]
so that condition (c) holds by taking $\alpha(i) = 1/\mu^ir$. In this way, by Proposition \ref{lem asint}, the sequence
$\{\cod_i\}_{i=1}^{\infty}$ of AG-codes
$\cod_i = C_{\rr}(D_i,r_iG_i)$, for certain integers $r_i>0$ provided by the proof of the proposition, 
is asymptotically good over $\ff_q$ attaining a $(\ell,\delta)$-bound with
\[ \ell =1- \frac{g_0-1+\epsilon t}{r}.\]

\noindent \textit{Final step: block transitivity}.
Finally, notice that both divisors $D_i$ and $G_i$ are invariant under $\mathrm{Gal}(F'_i/F_0)$ so that by the definition of $D_i$ and the action of $\mathrm{Gal}(F'_i/F_0)$ on the places in the support of $D_i$, we see at once that $C_{\rr}(D_i,r_i G_i)$ is an $r$-block transitive AG-code over $\ff_q$.	
\end{proof}

Now we give a lower bound for the numbers $r_i$ in \eqref{the codes}, in terms of the parameters given in the hypothesis and the parameter 
$\delta$, which represents the desired relative minimum distance of the codes $\cod_i=C(D_i,r_iG_i)$ in the sequence (as can be seen in the proof of Proposition \ref{lem asint}). 
We have
\begin{equation} \label{ris}
r_i  \ge (1-\delta)r\mu^i-1
\end{equation}
and hence they are bounded by $r$, $\delta$ and $\mu$ for every $i$. 

To see this, without loss of generality we can take $0<\delta<1-q^{-1}$, since Manin's function $\alpha_q(\delta)=0$ for $1-q^{-1}\leq \delta<1$. Let $N=\log_{\mu}(\tfrac qr)$. Then,  for $i\ge N$ we have
$\alpha(i) = \frac{1}{r\mu^i} \le \frac{1}{q}< 1-\delta$,
and thus \eqref{etaerrei} tell us that 
\begin{equation}\label{corodela3.2}
\Big \lceil (1-\delta-\tfrac{1}{\mu^ir}) \frac{n_i}{\deg G_i} \Big \rceil \le r_i \le 
\Big \lfloor (1-\delta) \frac{n_i}{\deg G_i} \Big \rfloor.
\end{equation}
Therefore, for $i\ge N$ we get
$r_i\geq (1-\delta- \tfrac{1}{\mu^ir})\frac{n_i}{\deg G_i}\geq (1-\delta)r\mu^i-1$.

\begin{rem} \label{bound for r's}
At first, it seems that the condition of having an absolutely $\mu$-ramified place in the tower $\mathcal{F}$ in Theorem 
\ref{generalgoodag} is only for technical reasons, but the above lower bound shows that $\mu$ has something to do also with the AG-codes constructed from $\mathcal{F}$.
\end{rem}

\section{Good block-transitive codes attaining the TVZ-bound}
As a consequence of Theorem \ref{generalgoodag}, we will next show that the TVZ-bound can be attained by sequences of certain block-transitive codes (constructed from towers of function fields) whose lengths can be computed explicitly while their minimum distances and dimensions can be estimated in terms of the cardinality of the considered finite field $\ff_q$. The structure of the Galois groups acting on the codes is also known.

\subsubsection*{From wildly ramified towers}
We will construct good $r$-block transitive codes over finite fields of square cardinality $\ff_{q^2}$, 
with $r=q^2-q$. We will need the following notation; let $q>2$ and put  
\begin{equation} \label{mis}
m_i=\left\{ \begin{array}{ll}
q^{2i-1} \quad \text{(resp. $q^{2i-1-\lfloor i/2\rfloor})$} & \qquad \text{if $1\leq i\leq 2$ and $q$ is odd (resp.\@ even),} \msk \\
q^{3i-3} \quad \text{(resp. $q^{3i-3-\lfloor i/2\rfloor})$}  & \qquad \text{if $i\geq 3$ and $q$ is odd (resp.\@ even),} 
\end{array} \right. 
\end{equation}   
for each $i\in \na$.

\begin{thm} \label{goodag-psquare}
Let $q>2$ be a prime power. Then, there exists a sequence $\mathcal{G} = \{\cod_i\}_{i=1}^{\infty}$ of $r$-block transitive codes over 
$\ff_{q^2}$, with $r=q^2-q$, attaining the TVZ-bound.
Each $\cod_i$ is an $[n_i=rm_i,k_i,d_i]$-code, 
where $m_i$ is as in \eqref{mis}.
By fixing $0<\delta<1-q^{-2}$, 
we also have that 
\begin{equation} \label{kidi}
d_i\geq \delta \,(q^2-q)m_i \qquad \text{and} \qquad  
k_i \geq \{(1-\delta)(q^2-q) - (q + q^{-i})\} m_i
\end{equation}
for each $i\geq 1$, where the second inequality is non trivial for
$0<\delta< 1- \tfrac 1r (q+q^{-i})$.
\end{thm}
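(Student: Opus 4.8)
The plan is to apply Theorem~\ref{generalgoodag} to the optimal wildly ramified tower $\F=\{F_i\}_{i=0}^{\infty}$ over $\ff_{q^2}$ of \cite{GS96}, whose Galois closure is analysed in detail in \cite{BB} and \cite{BB11}, and then to read the explicit parameters off the construction used in the proof of that theorem. First I would collect the arithmetic of $\F$ established in those references: $\F$ is a $2$-bounded tower over $\ff_{q^2}$ with $g_0=g(F_0)=0$; its ramification locus $R(\F)$ consists of exactly $q+1$ rational places, among which the pole place $P_0$ is totally ramified in $\F$, hence absolutely $q$-ramified in the sense of Definition~\ref{v-ram}; its splitting locus $Sp(\F)$ contains at least $q^2-q$ rational places; and the Galois closure $\mathcal{E}=\{F'_i\}_{i=0}^{\infty}$ of $\F$ over $F_0$ has degrees $[F'_i:F_0]=m_i$ with $m_i$ as in \eqref{mis} (the split into $1\le i\le 2$ and $i\ge 3$, and into odd and even $q$, reflecting how the Galois group of the closure stabilises).

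Granting this, the hypotheses of Theorem~\ref{generalgoodag} hold with $\Gamma=R(\F)$, so $t=|\Gamma|=q+1$, with $\Omega\subset Sp(\F)$ of size $r:=|\Omega|=q^2-q$, and with $\epsilon=1$ since $\F$ is wild: indeed $g_0-1+\epsilon t=q$ and, because $q>2$, $0<q<q(q-1)=q^2-q=r$, which is \eqref{keycondition}; and $P_0\in R(\F)=\Gamma$ is absolutely $\mu$-ramified with $\mu=q>1$. Hence Theorem~\ref{generalgoodag} delivers a sequence $\mathcal{G}=\{\cod_i\}_{i=1}^{\infty}$ of $r$-block transitive AG-codes over $\ff_{q^2}$, defined over $\mathcal{E}$, attaining an $(\ell,\delta)$-bound with
\[ \ell=1-\frac{g_0-1+\epsilon t}{r}=1-\frac{q}{q^2-q}=1-\frac{1}{q-1}=1-A(q^2)^{-1} \]
by \eqref{Aq2}, i.e. the TVZ-bound over $\ff_{q^2}$ (and $\lambda(\mathcal{E})\ge r/(g_0-1+\epsilon t)=q-1=A(q^2)$, so $\mathcal{E}$ is itself optimal). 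By \eqref{the codes}--\eqref{the divisors}, $\cod_i=C_{\rr}(D_i,r_iG_i)$ lives over $F'_i$, with $D_i$ the sum of the $n_i=r[F'_i:F_0]$ rational places of $F'_i$ over $\Omega$; since $[F'_i:F_0]=m_i$ this gives $n_i=rm_i=(q^2-q)m_i$, so $\cod_i$ is an $[n_i=rm_i,k_i,d_i]$-code, and it is $r$-block transitive with $r=q^2-q$ by the final step of the proof of Theorem~\ref{generalgoodag}, as $D_i$ and $G_i$ are $\gal(F'_i/F_0)$-invariant.

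For the estimates \eqref{kidi}, fix $\delta$ with $0<\delta<1-q^{-2}$; nothing needs to be proved for $\delta\ge 1-q^{-2}$ since then $\alpha_{q^2}(\delta)=0$. In the notation of the proofs of Theorem~\ref{generalgoodag} and Proposition~\ref{lem asint} we have $\alpha(i)=\tfrac1{\mu^i r}=\tfrac1{q^i(q^2-q)}$, and since $\deg G_i\le [F'_i:F_0]/\mu^i=m_i/q^i$ (cf.~\eqref{bound-deg-Gi}) a positive integer $r_i$ satisfying \eqref{etaerrei}, namely $1-\delta\ge r_i\deg G_i/n_i>1-\delta-\alpha(i)$, exists for every $i\ge 1$ (the interval in \eqref{corodela3.2} has length $\ge 1$ and positive right endpoint). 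From $d_i\ge n_i-r_i\deg G_i$ and the first inequality in \eqref{etaerrei},
\[ d_i\ge n_i\Bigl(1-\frac{r_i\deg G_i}{n_i}\Bigr)\ge\delta n_i=\delta(q^2-q)m_i. \]
For the dimension, $k_i\ge\deg(r_iG_i)+1-g(F'_i)$ since $\deg(r_iG_i)<n_i$, and the Hurwitz bound for the $2$-bounded tower $\mathcal{E}$ (cf.~\eqref{genusgprime}) gives $g(F'_i)\le(g_0-1+\epsilon t)[F'_i:F_0]+1=qm_i+1$; hence, using the second inequality in \eqref{etaerrei} and $n_i/(\mu^i r)=m_i/q^i=q^{-i}m_i$,
\begin{align*}
k_i&\ge r_i\deg G_i+1-g(F'_i)>\bigl(1-\delta-\tfrac1{q^i(q^2-q)}\bigr)(q^2-q)m_i+1-(qm_i+1)\\
&=\bigl\{(1-\delta)(q^2-q)-(q+q^{-i})\bigr\}m_i,
\end{align*}
which is positive exactly when $(1-\delta)(q^2-q)>q+q^{-i}$, i.e. when $0<\delta<1-\tfrac1r(q+q^{-i})$. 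This yields \eqref{kidi}.

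The hard part of this programme is not the assembly above but the arithmetic of $\F$ and of its Galois closure that it rests on: that $\F$ (hence $\mathcal{E}$, by \cite{GS}) is $2$-bounded, that $R(\F)$ has exactly $q+1$ rational places with $P_0$ totally ramified, that $Sp(\F)$ carries at least $q^2-q$ rational places, and --- above all --- the exact degree formula $[F'_i:F_0]=m_i$ of \eqref{mis}; establishing these is precisely the detailed work of \cite{BB} and \cite{BB11}. Once they are in place, what remains is a careful application of Theorem~\ref{generalgoodag} and bookkeeping of the constants appearing in the proof of Proposition~\ref{lem asint}. The companion statement over $\ff_{p^2}$ with $r=2(p-1)$, $p$ an odd prime, is obtained in the same way from the second quadratic optimal tower of \cite{GS2}.
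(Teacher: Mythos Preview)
Your proposal is correct and follows essentially the same route as the paper: apply Theorem~\ref{generalgoodag} to the Garcia--Stichtenoth tower of \cite{GS96} with $\mu=q$, $\epsilon=1$, $g_0=0$, $t=q+1$, $r=q^2-q$, invoke \cite{BB11} for $[F'_i:F_0]=m_i$, and read off the bounds on $d_i$ and $k_i$ from \eqref{etaerrei}. The only cosmetic difference is that you obtain $g(F'_i)\le qm_i+1$ via a direct Hurwitz estimate for the $2$-bounded extension $F'_i/F_0$, whereas the paper reaches the same inequality through the monotonicity of $(g(F'_i)-1)/[F'_i:F_0]$ (\cite[Lemma~7.2.3]{Sti}) together with the limit bound \eqref{genusgprime}.
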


\begin{proof} 
Let us consider now the wildly ramified tower $\F=\{F_i\}_{i=0}^{\infty}$ over $\ff_{q^2}$ recursively defined by the equation
\[ y^q+y = \frac{x^q}{x^{q-1}+1}, \]
whose optimality was first proved in \cite{GS96} (see also Section 7.4 of \cite{Sti}). 
From these works we know that $\F$ is a $2$-bounded tower over $\ff_{q^2}$, the pole $P_{\infty}$ of $x_0$ in $F_0=\ff_{q^2}(x_0)$ 
is totally ramified in $\F$, so that $P_{\infty}$ is absolutely $q$-ramified in $\F$, at least $q^2-q$ rational places of 
$\ff_{q^2}$ split completely in $\F$ and 
the ramification locus $R(\F)$ has at most $q+1$ elements, i.e.\@ $q^2-q\le |Sp(\F)|$ and $|R(\F)| \le q+1$.

Thus, we are in the conditions of Theorem \ref{generalgoodag} with 
$$\mu=q, \qquad \epsilon=1, \qquad g_0=0, \qquad r=q^2-q \qquad \text{and} \qquad t=q+1.$$
Therefore, there exists a sequence $\mathcal{G}=\{\cod_i\}_{i\in\na}$ of $r$-block transitive AG-codes over $\ff_{q^2}$ attaining a $(\ell, \delta)$-bound with 
$$ \ell = 1-\frac{g_0-1+t}{r} = 1-\frac{q}{q^2-q} = 1-\frac{1}{q-1}, $$
which means that this sequence of codes over $\ff_{q^2}$ attains the TVZ-bound (see \eqref{tvz}).
Notice that this can also be checked directly since, from \eqref{kidi}, we have $\delta_i+R_i\ge 1-\tfrac{1}{q-1}$ for every $i$.
 
\sk 
The codes in $\mathcal{G}$ are of the form $\cod_i=C_\rr(D_i,r_iG_i)$ as in \eqref{the codes}. From \cite[Corollary 27]{BB11} we have that the degree of the extension $F'_i/F_0$ is given by the number $m_i$ in \eqref{mis}, where $F'_i$ is the Galois closure of 
$F_i$ over $F_0$. Thus, from the proof of Theorem~\ref{generalgoodag}, for each 
$i\geq 1$ we have  
$$n_i=r[F'_i:F_0]=(q^2-q)m_i=rm_i.$$ 
Now, fix $0<\delta<1-q^{-2}$. Since $\mu=q$, the right hand side of \eqref{corodela3.2} is valid for $i\geq 1$, 
because $\log_q(\tfrac{q^2}{r}) = \log_q (\tfrac{q}{q-1}) = 1 - \log_q (q-1) <1$, and thus
$$d_i \ge n_i-r_i \deg G_i \geq n_i-(1-\delta)n_i = \delta n_i$$ 
for $i\geq 1$, as desired. 

Finally, from \cite[Lemma 7.2.3]{Sti} we know that the sequence of rational numbers
$\big\{ \frac{g(F'_i)-1}{[F'_i:F_0]} \big\}_{i=0}^{\infty}$
is monotonically increasing and its limit is the genus $\gamma(\mathcal{E})$ of the tower $\mathcal{E}=\{F'_i\}_{i=0}^{\infty}$. Also, from the proof of Theorem \ref{generalgoodag} we have an upper bound for the genus $\gamma(\mathcal{E})$ of $\mathcal{E}$. Therefore 
\[\frac{g(F'_i)-1}{[F'_i:F_0]} \leq \gamma(\mathcal{E})\leq g_0-1+\varepsilon t=q, \]
so that $g(F'_i)\leq q m_i+1$ for $i\geq 0$. 
From \eqref{etaerrei} we also have
\[ r_i\deg G_i\geq (1-\delta-\tfrac{1}{rq^i}) r m_i, \]
for $i\geq 1$. Then, since $k_i \ge r_i\deg G_i+1-g(F'_i)$,  
for every $i\geq 1$ we have
$$k_i \geq (1-\delta-\tfrac{1}{rq^i}) r m_i +1-(qm_i+1) = \{ (1-\delta)r-(q+q^{-i})\} m_i.$$
It is easy to check that $(1-\delta)r-(q+q^{-i})>0$ 
if $\delta< 1- \tfrac 1r (q+q^{-i})$. 
This proves the theorem.
\end{proof}

\begin{exam}
By the previous theorem with $q=3$, there are good sequences of $6$-block transitive codes over $\ff_9$ attaining a TVZ-bound with $0<\delta<\tfrac 12$. Similarly, taking $q=4$ there are good $12$-block transitive codes over $\ff_{16}$ 
with $0<\delta<\tfrac 23$. 
\end{exam}

\subsubsection*{From tame towers}
Here we construct good $r$-block transitive codes over $\ff_{p^2}$, with $r=2(p-1)$.
For $i\in \na$ we put 
\begin{equation} \label{misss} 
m_i = \left\{ \begin{array}{ll}
2^i           & \qquad \text{if $i=1,2,3$,} \sk \\
2^{2i-3}      & \qquad \text{if $i=4,5$,} \sk \\
2^{3i-8}      & \qquad \text{if $i\geq 6$.}
\end{array} \right. 
\end{equation}

\begin{thm} \label{goodag psquare2}
Let $p\geq 13$ be a prime and put $r=2(p-1)$. 
Then, there exists a sequence $\mathcal{C}=\{\cod_i\}_{i=1}^{\infty}$ of $r$-block transitive AG-codes over $\ff_{p^2}$ attaining the 
TVZ-bound.
Each $\cod_i$ is an $[n_i=rm_i,k_i,d_i]$-code with 
with $m_i$ as in \eqref{misss}. 
Moreover, by fixing $0<\delta<1-p^{-2}$, we have  
\begin{equation} \label{kidis} 
d_i \ge 2\delta (p-1)m_i \qquad \text{and} \qquad  
k_i \ge \{2 (1-\delta)(p-1) - (2+2^{-i})\} m_i
\end{equation}
for $i\geq \log_2(p)$, where the second inequality is non trivial if $\delta< 1- \tfrac 1r (2 + 2^{-i})$.
\end{thm}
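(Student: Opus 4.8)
The plan is to rerun the argument of Theorem~\ref{goodag-psquare} in the tame case, feeding Theorem~\ref{generalgoodag} with the tame optimal tower of \cite{GS2} over $\ff_{p^2}$ (so that $\epsilon=\tfrac12$) instead of the wild Garcia--Stichtenoth tower.

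First I would assemble from \cite{GS2} (see also \cite[Section~7.4]{Sti}) the arithmetic data of the relevant tame tower $\F=\{F_i\}_{i=0}^{\infty}$ over $\ff_{p^2}$: it is tamely ramified with Galois steps, $F_0=\ff_{p^2}(x_0)$ so that $g_0=g(F_0)=0$; it is optimal, $\lambda(\F)=A(p^2)=p-1$; its splitting locus contains at least $2(p-1)$ rational places of $F_0$; its ramification locus $R(\F)$ is non empty with $|R(\F)|\le 6$; and it has a place $P_0\in R(\F)$ that is absolutely $2$-ramified in $\F$ (for instance a place that is totally ramified in $\F$). Taking $\Omega\subseteq Sp(\F)$ with $|\Omega|=r:=2(p-1)$ and $\Gamma\supseteq R(\F)$ with $|\Gamma|=t:=6$, we are in the situation of Theorem~\ref{generalgoodag} with $\epsilon=\tfrac12$, $\mu=2$, $g_0=0$, $r=2(p-1)$ and $t=6$; indeed condition \eqref{keycondition} reads $0<g_0-1+\tfrac t2=2<2(p-1)=r$, which holds for all $p\ge 3$, and the restriction $p\ge 13$ is precisely the range in which the data of \cite{GS2} just quoted and the Galois-closure computation of \cite{BB} invoked below are available. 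Theorem~\ref{generalgoodag} then produces a sequence $\mathcal{C}=\{\cod_i\}$ of $r$-block transitive AG-codes over $\ff_{p^2}$ attaining the $(\ell,\delta)$-bound with
\[ \ell=1-\frac{g_0-1+\epsilon t}{r}=1-\frac{2}{2(p-1)}=1-\frac{1}{p-1}=1-\frac{1}{A(p^2)}, \]
that is, with the TVZ-bound over $\ff_{p^2}$ by \eqref{tvz}; these codes are defined over the Galois closure $\mathcal{E}=\{F'_i\}$ of $\F$, which has $\lambda(\mathcal{E})>1$.

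Next I would pin down the parameters. By \cite{BB} the degree $[F'_i:F_0]$ of the Galois closure of $F_i$ over $F_0$ equals the integer $m_i$ of \eqref{misss}; since each code of the sequence has the shape $\cod_i=C_\rr(D_i,r_iG_i)$ with $D_i=S_{i,1}+\cdots+S_{i,n_i}$ and $n_i=r[F'_i:F_0]$ (Step~1 of the proof of Theorem~\ref{generalgoodag}), we obtain $n_i=2(p-1)m_i=rm_i$. Now fix $0<\delta<1-p^{-2}$ (otherwise $\alpha_{p^2}(\delta)=0$ and there is nothing to prove). Since $\mu=2$ and $\log_2\!\big(\tfrac{p^2}{r}\big)=\log_2\!\big(\tfrac{p^2}{2(p-1)}\big)<\log_2 p$ for $p\ge 3$, the estimates \eqref{corodela3.2} hold for every $i\ge\log_2 p$; the right-hand one gives $r_i\deg G_i\le(1-\delta)n_i$, whence
\[ d_i\ge n_i-r_i\deg G_i\ge\delta n_i=2\delta(p-1)m_i. \]
For the dimension, by \cite[Lemma 7.2.3]{Sti} together with \eqref{genusgprime} the sequence $\big\{\tfrac{g(F'_i)-1}{[F'_i:F_0]}\big\}$ is increasing with limit $\gamma(\mathcal{E})\le g_0-1+\epsilon t=2$, so $g(F'_i)\le 2m_i+1$; combining $k_i\ge r_i\deg G_i+1-g(F'_i)$ with the left-hand inequality of \eqref{corodela3.2}, namely $r_i\deg G_i\ge\big(1-\delta-\tfrac{1}{2^ir}\big)rm_i$, we get
\[ k_i\ge\Big(1-\delta-\tfrac{1}{2^ir}\Big)rm_i+1-(2m_i+1)=\{2(1-\delta)(p-1)-(2+2^{-i})\}m_i, \]
a bound that is non trivial exactly when $\delta<1-\tfrac1r(2+2^{-i})$, as claimed.

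The one real obstacle is the first step: extracting from \cite{GS2} and \cite{BB} the precise ramification and splitting behaviour of this tame tower and of its Galois closure --- in particular $|R(\F)|\le 6$, the equality $[F'_i:F_0]=m_i$, the lower bound $2(p-1)$ for the number of completely split rational places, and the existence of an absolutely $2$-ramified place in $\F$ --- and verifying that these all hold for $p\ge 13$. Once that dictionary is in place, the remainder is a line-by-line transcription of the proof of Theorem~\ref{goodag-psquare} with $\epsilon=\tfrac12$, $\mu=2$, $t=6$ and $\gamma(\mathcal{E})\le 2$.
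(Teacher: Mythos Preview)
Your proposal is correct and follows essentially the same approach as the paper's own proof: feed Theorem~\ref{generalgoodag} with the tame quadratic tower $y^2=(x^2+1)/2x$ of \cite{GS2}, take $\epsilon=\tfrac12$, $\mu=2$, $g_0=0$, $t=6$, $r=2(p-1)$, pull the degree $[F'_i:F_0]=m_i$ from \cite{BB}, and then repeat the parameter estimates of Theorem~\ref{goodag-psquare} verbatim. The paper cites the specific results you flagged as the ``real obstacle'' (namely \cite[Prop.~5.3, Thm.~5.5]{GS2}, \cite[Prop.~5.6]{GS} for the ramification/splitting data and \cite[Thm.~6.1]{BB} for $m_i$ when $p\ge 13$), but otherwise your outline matches the paper line by line.
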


\begin{proof}
Let $q=p^2$ where $p$ is an odd prime number. The tamely ramified tower 
$\F=\{F_i\}_{i=0}^{\infty}$ over $\ff_q$ recursively defined by the equation
$$ y^2 = \frac{x^2+1}{2x}, $$
was shown to be optimal over $\ff_q$ in \cite{GS2}.   From \cite[Proposition 5.3]{GS2} we have that the ramification locus $R(\F)$ consists of exactly $6$ elements and also the pole $P_{\infty}$ of $x_0$ in $F_0$ is totally ramified in the tower.  
Also, from \cite[Theorem 5.5]{GS2} and \cite[Proposition 5.6]{GS} we have that there exists a set $\Omega\subset Sp(\F)$ of cardinality $2(p-1)$. Thus we are in the conditions of Theorem \ref{generalgoodag} with 
$$\mu=2, \qquad \varepsilon=\tfrac 12, \qquad g_0=0, \qquad r=2(p-1) \qquad \text{and} \qquad t=6.$$ 
Hence, there exists a sequence of $r$-block transitive codes over $\ff_q$ attaining a $(\ell, \delta)$-bound with 
$$ \ell = 1-\frac{2g_0-2+t}{2r} = 1-\frac{4}{4(p-1)} = 1-\frac{1}{p-1},$$
which means that this sequence of codes over $\ff_q$ attains the TVZ-bound. 
Again, this can be easily checked directly from \eqref{kidis}, since $\delta_i+R_i\ge 1-\tfrac{1}{p-1}(1+\tfrac{1}{2^{i+1}})$ for every $i$.

On the other hand, from \cite[Theorem 6.1]{BB} we have that if $p\geq 13$ the number $m_i$ in \eqref{misss} is the degree of the extension 
$F'_i/F_0$, where $F'_i$ is the Galois closure of $F_i$ over $F_0$. From the proof of Theorem \ref{generalgoodag} we have that 
$n_i=r[F'_i:F_0]=2(p-1)m_i$ for $i\geq 1$ as stated. 

Now, fix  $0<\delta<1-q^{-1}$. Since $\mu=2$  the right hand side of 
\eqref{corodela3.2} is valid for $i\geq \log_2 (p)$ because 
$$ \log_2(\tfrac qr) = \log_2(\tfrac{p^2}{2(p-1)}) = \log_2(p) + \log_2(\tfrac{p}{p-1}) - 1 < \log_2 (p). $$  
Thus, for $i\geq \log_2(p)$ we have that 
$$ d_i \geq n_i-r_i \deg G_i \geq n_i-(1-\delta)n_i =\delta n_i, $$ 
as desired. Finally, as in the proof of Theorem \ref{goodag-psquare}, we have 
$\frac{g(F'_i)-1}{[F'_i:F_0]}\leq\gamma(\mathcal{E})\leq g_0-1 + \varepsilon t=2$ for $i\geq 1$,
where $\mathcal{E}=\{F'_i\}_{i=0}^{\infty}$. Then, $g(F'_i)\leq 2 m_i+1$ for $i\geq 1$ and, from \eqref{etaerrei}, 
we also have
$$ r_i\deg G_i\geq (1-\delta- \tfrac{2^{-i}}{r})rm_i, $$
for $i\geq\log_2(p)$. Therefore, since $k_i \ge r_i\deg G_i+1-g(F'_i)$, for $i\geq \log_2(p)$ we have
$$k_i \geq (1-\delta-\tfrac{2^{-i}}{r})rm_i+1-2m_i-1 = \{2(1-\delta)(p-1)-2-2^{-i}\} m_i.$$
It is easy to check that $2(1-\delta)(p-1)-2-2^{-i}>0$ if $\delta < 1- \tfrac 1r(2+2^{-i})$,
and the result follows.
\end{proof}

\begin{exam}
By the previous theorem with $p=13$, there are good sequences of $24$-block transitive codes over $\ff_{13^2}$ attaining a TVZ-bound with 
$0< \delta < \tfrac{11}{12}$. Also, by taking $p=17$, there are good sequences of $32$-block transitive codes over $\ff_{17^2}$ attaining a TVZ-bound with 
$0< \delta < \tfrac{15}{16}$. 
\end{exam}

Note that the dimensions of the codes in the good sequences of the previous theorems are in geometric progressions. In Theorem \ref{goodag-psquare}, 
the common ratio is $q^3$ if $q$ is odd (if $q$ is even, the ratio $\tfrac{n_{i+1}}{n_i}$ oscilates between $q^3$ and $q^2$ depending on the parity of $i$) while in Theorem \ref{goodag psquare2} it is $2^3=8$.

\section{Good block-transitive codes from class field towers}
Now we prove the existence of good $r$-block transitive codes over finite fields, not necessarily of quadratic cardinality. This result will also allow us to give examples of asymptotically good $r$-block transitive codes for  many values of $r$ different from the ones given in Theorems \ref{goodag-psquare} and \ref{goodag psquare2}. The price we must pay is that of having weaker bounds for the performance of the codes.

We will need the following notations. Given $n,m \in \mathbb{Z}$, we define the auxiliary function
\begin{equation} \label{epsilon}
\varepsilon_n(m) = \left\{ 
                    \begin{array}{ll} 
                         1 & \quad \text{if $n \mid m$,} \sk \\ 
                         0 & \quad \text{if $n \nmid m$.} 
                    \end{array} \right.
\end{equation}
Also, for a polynomial $h \in \ff_q[t]$, we define
\begin{equation}
\begin{aligned}
S_q^2(h) & = \{ \beta\in\ff_q : \text{$h(\beta)$ is a non zero square in $\ff_q$}\}, \\
S_q^3(h) & = \{ \beta\in\ff_q : \text{$h(\beta)$ is a non zero cube in $\ff_q$}\}.
\end{aligned}
\end{equation}

\begin{thm} \label{goodAGanychar}
Let $h\in \ff_q[t]$ be a monic and separable polynomial of degree $m$ such that it splits completely into linear factors over $\ff_q$. 

\sk (a) Let $q$ be odd. Suppose there is a set $\Sigma_o \subset S_q^2(h)$ 
such that $u=|\Sigma_o|>0$ and 
\begin{equation} \label{sigmao1}
    2\sqrt{2u} \le  m-(u+2+\varepsilon_2(m)) < 3u.
\end{equation}
	Then, there exists a tamely ramified Galois tower $\F$ over $\ff_q$ with limit
$\lambda(\F) \geq \frac{4u}{m-2-\varepsilon_2(m)} > 1$.
In particular, there exists a sequence of asymptotically good $2u$-block transitive codes over $\ff_q$, constructed from $\mathcal{F}$, attaining an 
	$(\ell,\delta)$-bound, with $\ell = 1- \frac{m-2-\varepsilon_2(m)}{4u}$.
	
\sk (b) Let $q$ be even of the form $q=2^{2s}$. 
Suppose there is a set 
$\Sigma_e \subset S_q^3(h)$ such that $v=|\Sigma_e|>0$ and
	\begin{equation}\label{sigmaed=3}		
2\sqrt{3v} \le  m-(v+2+\varepsilon_3(m)) < 2v-\tfrac 12.
\end{equation} 
		Then, there exists a tamely ramified Galois tower $\F'$ over $\ff_q$ with limit
$\lambda(\F') \geq \frac{6v}{2(m-\varepsilon_3(m))-3} > 1$.
	In particular, there exists a sequence of asymptotically good $3v$-block transitive codes over $\ff_q$, 
	constructed from $\mathcal{F}'$, attaining an 
	$(\ell,\delta)$-bound with $\ell=1-\frac{2m-3-2\varepsilon_3(m)}{6v}$. 
\end{thm}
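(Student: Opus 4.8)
The strategy is to apply Theorem \ref{generalgoodag} to suitable Hilbert class field towers built from the polynomial $h$, one for the odd case and one for the even case. First I would recall from \cite{AM} the construction of tamely ramified Hilbert class field towers over $\ff_q$ attached to a Kummer-type extension of $\ff_q(t)$: in part (a), set $F_0 = \ff_q(t,z)$ with $z^2 = h(t)$, and in part (b), set $F_0 = \ff_q(t,z)$ with $z^3 = h(t)$. Since $h$ is monic, separable, and splits into linear factors, the ramification of $F_0/\ff_q(t)$ occurs exactly at the $m$ zeros of $h$ plus possibly the place at infinity, and this is where the terms $m$ and $\varepsilon_2(m)$ (resp. $\varepsilon_3(m)$) enter: the place at infinity ramifies precisely when the degree $m$ is not divisible by $2$ (resp. $3$). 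A genus computation for $F_0$ via Riemann-Hurwitz then gives $g_0 = g(F_0)$, and one checks that $g_0 - 1 = \tfrac12(m - 2 - \varepsilon_2(m)) - 1 + 1$ works out so that $g_0 - 1 + \epsilon t$ with $\epsilon = \tfrac12$ (tame case) and $t = |R(\F)|$ equals $\tfrac12(m - 2 - \varepsilon_2(m))$; similarly in the cubic case one gets $\tfrac13(2m - 3 - 2\varepsilon_3(m))$ after clearing denominators appropriately. The left inequality in \eqref{sigmao1} (resp. \eqref{sigmaed=3}), namely $2\sqrt{2u} \le m - (u+2+\varepsilon_2(m))$, is exactly the Golod-Shafarevich-type condition from \cite{AM} guaranteeing that the class field tower is infinite (hence genuinely a tower, with $g(F_i) \to \infty$), and it also yields the stated lower bound on $\lambda(\F)$.

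Next I would verify the splitting locus: each $\beta \in \Sigma_o$ gives a rational place of $\ff_q(t)$ at which $h(\beta)$ is a nonzero square, so it splits in $F_0/\ff_q(t)$ into two rational places, and by the defining property of the Hilbert class field tower (unramified everywhere, with these places chosen to split completely) all these places lie in $Sp(\F)$; thus $|Sp(\F)| \ge 2u$, and we take $\Omega$ to be a set of $r = 2u$ of them. In the cubic case, $\beta \in \Sigma_e$ splits into three rational places (here one needs $\ff_q$ to contain the cube roots of unity, which is why $q = 2^{2s}$ is assumed, so $3 \mid q - 1$), giving $|Sp(\F')| \ge 3v$ and $r = 3v$. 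For the ramification locus one takes $\Gamma = R(\F)$ itself, all of whose elements are rational (the zeros of $h$ and possibly $\infty$), so $t = |\Gamma| = |R(\F)|$; the key inequality \eqref{keycondition}, $0 < g_0 - 1 + \epsilon t < r$, then reduces after the genus bookkeeping to the right-hand inequalities in \eqref{sigmao1} and \eqref{sigmaed=3}. The remaining hypothesis of Theorem \ref{generalgoodag} is the existence of an absolutely $\mu$-ramified place $P_0 \in R(\F)$ for some $\mu > 1$: since $F_0/\ff_q(t)$ is a degree-$2$ (resp. degree-$3$) Kummer extension ramified at the zeros of $h$, any such zero gives a place of $F_0$ that is totally ramified over $\ff_q(t)$, and one verifies using the structure of the class field tower (which is unramified above $F_0$) that this place remains absolutely $2$-ramified (resp. $3$-ramified) in $\F$ — actually here a subtlety arises, since the class field tower $F_i/F_0$ is \emph{unramified}, so I would instead take $P_0$ to be a zero of $h$ viewed in $F_0$ and note the ramification happens in the bottom step $F_0/\ff_q(t)$; the absolute $\mu$-ramification of Definition \ref{v-ram} is stated for places of $F_0$ ramifying in $F_1, F_2, \ldots$, so one must be careful and perhaps re-index so that the "$F_0$" of Theorem \ref{generalgoodag} is $\ff_q(t)$ and the tower starts with the Kummer step.

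Granting all of this, Theorem \ref{generalgoodag} immediately produces a sequence $\mathcal{G} = \{\cod_i\}$ of $r$-block transitive AG-codes over $\ff_q$ (with $r = 2u$, resp. $r = 3v$) attaining an $(\ell,\delta)$-bound with $\ell = 1 - \tfrac{1}{r}(g_0 - 1 + \epsilon t)$, which after substituting the computed values equals $1 - \tfrac{m - 2 - \varepsilon_2(m)}{4u}$ in case (a) and $1 - \tfrac{2m - 3 - 2\varepsilon_3(m)}{6v}$ in case (b), and the limit bound $\lambda(\mathcal{E}) > 1$ of the Galois closure gives the corresponding lower bound on $\lambda(\F)$ (resp. $\lambda(\F')$) since the Galois closure of a Galois tower is the tower itself. \textbf{Main obstacle.} The delicate point I expect to be the crux is the genus computation for $F_0$ together with the exact identification of $R(\F)$ and its cardinality: one must pin down precisely when $\infty$ ramifies (the role of $\varepsilon_2(m)$ and $\varepsilon_3(m)$) and confirm that $R(\F)$ — which for a Hilbert class field tower equals the ramification locus of the bottom extension $F_0/\ff_q(t)$ — has cardinality exactly matching the $t$ that makes $g_0 - 1 + \epsilon t$ collapse to the clean expressions above; getting this bookkeeping wrong by one, or mishandling the place at infinity, would break the translation of \eqref{sigmao1}–\eqref{sigmaed=3} into the hypothesis \eqref{keycondition} of Theorem \ref{generalgoodag}. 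A secondary subtlety is checking that the requisite splitting places from $\Sigma_o$, $\Sigma_e$ can simultaneously be used in the Golod–Shafarevich argument of \cite{AM} to force the tower to be infinite while also lying in its splitting locus — i.e. that \cite{AM} delivers a tower that is at once infinite, tame, everywhere unramified above $F_0$, and totally split at the prescribed rational places.
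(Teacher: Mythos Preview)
Your overall architecture is right: build the Kummer extension $F=\ff_q(x,y)$ with $y^n=h(x)$ ($n=2$ or $3$), invoke the Angl\`es--Maire $(S,T)$ Hilbert class field tower $\mathcal{H}_S^T$ over $F$, and feed the result into Theorem~\ref{generalgoodag}. But there is a genuine gap in how you identify $R(\F)$ and $t$, and this is precisely the bookkeeping you flagged as the main obstacle.

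The ramification locus $R(\F)$ is a set of places of $F_0$, and in the paper's setup $F_0=F$ (the Kummer extension), \emph{not} $\ff_q(t)$. The zeros of $h$ and the place at infinity are places of $\ff_q(t)$ that ramify in $F/\ff_q(t)$; they are \emph{not} elements of $R(\F)$. What actually ramifies in the tower $\F=\{F_i\}$ over $F_0=F$ is governed by the Angl\`es--Maire construction: $\mathcal{H}_S^T$ is by design unramified outside the chosen set $T$, and here one takes $T=\{Q_0\}$ to be a \emph{single} rational place of $F$ lying over one root of $h$. Hence $\Gamma=T$, $t=|\Gamma|=1$, and with $\epsilon=\tfrac12$ one gets
\[
g_0-1+\epsilon t \;=\; g(F)-\tfrac12 \;=\; \tfrac12(m-2-\varepsilon_2(m)) \quad\text{(resp. } m-\tfrac32-\varepsilon_3(m)\text{)},
\]
which against $r=|S|=2u$ (resp.\ $3v$) gives exactly the stated $\ell$ and matches the right-hand inequalities of \eqref{sigmao1}--\eqref{sigmaed=3}. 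Your worry that ``the class field tower $F_i/F_0$ is unramified'' is a misreading: the $T$-tamely ramified tower allows (tame) ramification at $T$, and Angl\`es--Maire in fact deliver $Q_0$ as an absolutely ramified place in every step, supplying the absolutely $\mu$-ramified place needed in Theorem~\ref{generalgoodag}.

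Your proposed fix of re-indexing so that $F_0=\ff_q(t)$ does not rescue the argument: then $\Omega$ consists of the $u$ places $P_\beta$ (not their $2u$ lifts), so $r=u$, while $t=m+1-\varepsilon_2(m)$ and $g_0=0$ give $g_0-1+\epsilon t=\tfrac12(m-1-\varepsilon_2(m))$. The resulting condition $m-1-\varepsilon_2(m)<2u$ is strictly stronger than \eqref{sigmao1}, the codes come out $u$-block rather than $2u$-block transitive, and $\ell=1-\tfrac{m-1-\varepsilon_2(m)}{2u}$ does not match the statement. The correct move is to keep $F_0=F$ and recognize that $t=1$.
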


\begin{proof}
Let $x$ be a transcendental element over $\ff_q$ and let $K=\ff_q(x)$ the rational field over $\ff_q$.
We will use the extensions of $K$ given by the super-elliptic equation 
\begin{equation} \label{superel}
y^n=h(x)
\end{equation} 
with $n=2$ or $3$, depending on whether $q$ is odd or even, respectively. 

We now divide the proof in three parts: (i) and (ii) consider the cases of $q$ odd and even respectively, while in (iii) 
we prove items (a) and (b) of the theorem.

\sk 
(i) Let $q$ be odd. 
From our assumptions and Proposition 6.3.1 in \cite{Sti} we have that the equation \eqref{superel} with $n=2$ 
defines a cyclic Galois extension $F_o=\ff_q(x,y)$ of degree 2 of $K$ with $\ff_q$ as the full field of constants of $F_o$. Also,  
the rational places $P_1,\ldots,P_m$ of $K$, defined by the linear factors of $h(x)$, are totally ramified in $F_o/K$ and no other places than 
$P_1,\ldots,P_m$ and $P_\infty$ ramify in $F_o/K$, 
where $P_\infty$ denotes the pole of $x$. 
Moreover, for every place $Q_\infty$ in $F$ over $P_\infty$ we have $e(Q_\infty|P_\infty)=\tfrac 2d$ with $d=(2,m)$ and the genus of 
$F_o$ is given by $g(F_o)=\tfrac 12 (m-d)$. Thus, if $m$ is even then 
$e(Q_\infty|P_\infty)=1$, hence there are exactly $m$ (totally) ramified places of $K$ in $F_o$ and $g(F_o)=\tfrac{m-2}{2}$. 
On the other hand, if $m$ is odd then we have $m+1$ (totally) ramified places in $F_o/K$ and $g(F_o)=\tfrac{m-1}2$.
	
Let $P_{\beta}$ be the zero of $x-\beta$ in $K$ with $\beta\in\Sigma_o$. Then, the residual class of $h(x)$ mod $P_\beta$ is 
$$ h(x)(P_{\beta}) = h(\beta) = \gamma^2, $$
for some $\gamma\in\ff_q^*$ by hypothesis, and we have that the separable polynomial 
$$t^2-\gamma^2 = (t-\gamma)(t+\gamma)$$ 
in $\ff_q[t]$ corresponds to the reduction mod $P_{\beta}$ of of $y^2-h(x)=0$. 
By Kummer's theorem \linebreak \cite[Theorem 3.3.7]{Sti} we see that $P_{\beta}$ splits completely into $2$ rational places of $F_o$ for each $\beta\in\Sigma_o$.
	
Now pick a rational place $P_0$ of $K$ defined by one of the linear factors of $h(x)$ and let $Q_0$ be the only place of $F_o$ lying above $P_0$, let $Q_1,\ldots, Q_{2u}$ be the rational places of $F_o$ lying over the places $P_{\beta}$ with $\beta\in\Sigma_o$ and put
$$T_o=\{Q_0\} \qquad \text{and} \qquad S_o=\{Q_1,\ldots,Q_{2u}\}.$$

\sk 
(ii)	Suppose now that $q=2^{2s}$.    
This condition is equivalent to have $q=2^m$ and $q\equiv 1$ mod $3$, which in turn is equivalent to the condition 
$3 \mid 2^m-1$. But this last condition holds if and only if $m$ is even, since $2^m-1 = 2^{m-1} + 2^{m-2} + \cdots + 2^1+2^0$ and 
$3$ divides the sum of two consecutive powers of $2$ because $2^h+2^{h-1}=3\cdot 2^{h-1}$. 

Thus, from our assumptions and \cite[Proposition 6.3.1]{Sti}, we have that the equation \eqref{superel} with $n=3$
defines a cyclic Galois extension $F_e=\ff_q(x,y)$ of degree 3 of $K$, with constant field $\ff_q$. 
Furthermore, any place $Q_\infty$ in $F_e$ over $P_\infty$ satisfies $e(Q_\infty|P_\infty)=\tfrac 3d$ with $d=(3,m)$ and the genus of 
$F_e$ is given by $g(F_e) = (m-1)- \tfrac{d-1}{2}$. Thus, if $3\mid m$ then $d=3$ and $e(Q_\infty|P_\infty)=1$, hence there are exactly $m$ (totally) ramified places of $K$ in $F_e$ and $g(F_e)=m-2$. On the other hand, if $3\nmid m$ then $d=1$ and hence we have 
$m+1$ (totally) ramified places in $F_e/K$ and $g(F)=m-1$.
	 
Let $P_{\beta}$ be the zero of $x-\beta$ in $\ff_q(x)$ where $\beta\in\Sigma_e$. Then the residual class of $h(x)$ mod $P_\beta$ is
	$$ h(x)(P_{\beta}) = h(\beta) = \gamma^3,$$
	for some $\gamma\in\ff_q^*$ by hypothesis, and we have that the separable polynomial 
     $$t^3-\gamma^3 = (t-\gamma) (t-\omega \gamma) (t- \omega^2 \gamma) $$ 
in $\ff_q[t]$  (where $\omega$ is a primitive $3$-rd root of unity) corresponds to the reduction mod $P_{\beta}$ of \linebreak
$y^3-h(x)=0$. By Kummer's theorem \cite[Theorem 3.3.7]{Sti} we see that $P_{\beta}$ splits completely into 3 rational places of $F_e$ for each $\beta\in\Sigma_e$.
	
	Again, pick a rational place $P_0$ of $K$ defined by one of the linear factors of $h(x)$ and let $R_0$ be the only place of $F_e$ lying above $P_0$ 
     and $R_1,\ldots, R_{3v}$ be the rational places of $F_e$ lying over the places $P_{\beta}$ with $\beta\in\Sigma_e$ and put
	$$T_e=\{R_0\} \qquad \text{and} \qquad S_e=\{R_1,\ldots,R_{3v}\}.$$
	
\sk
(iii) Let $\Sigma$ and $F$ be either $\Sigma_o$ and $F_o$ or $\Sigma_e$ and $F_e$, depending on the parity of $q$. Then,  
$$\mathcal{R}(F/K):=\{ P\in \mathbb{P}(K) : P \text{ ramifies in } F\}$$ 
has cardinality $m$ or $m+1$ depending on the parities of $q$ and $m$. 
More precisely, 
$$\# \mathcal{R}(F/K) = m+1-\varepsilon_n(m),$$
	where $n=2$ if $q$ is odd and $n=3$ if $q$ is even. 
	Therefore, 
     \[ \#\mathcal{R}(F/K) \ge 2 + |\Sigma| + 2\sqrt{n|\Sigma|}\]
	always holds, by \eqref{sigmao1} and \eqref{sigmaed=3}. 
          Therefore, the $T$-tamely ramified and $S$-decomposed Hilbert tower 
     $\mathcal{H}_S^T$ of $F$ is infinite, by Corollary 11 in \cite{AM}, where $T=T_o$ and $S=S_o$ if $q$ is odd or $T=T_e$ and $S=S_e$ otherwise. 
	
	This means that there is a sequence $\mathcal{F} = \{F_i\}_{i=0}^{\infty}$ of function fields over $\ff_q$ such that  $F_0=F$, 
     $\mathcal{H}_S^T = \bigcup_{i=0}^{\infty}F_i$ and that, for any $i\geq 1$, each place in $S_o$ (resp.\@ $S_e$) splits 
completely in $F_i$ (hence $\ff_q$ is the full constant field of $F_i$), the place $Q_0$ (resp.\@ $R_0$) is tamely and absolutely ramified in the tower, 
$F_i/F_{i-1}$ is an abelian extension with $[F_i:F]\rightarrow\infty$ as $i\rightarrow\infty$ and $F_i/F_0$  is unramified outside $T$. In particular, 
$T=R(\mathcal{H}_S^T)$ is the ramification locus of the tower 
     $\mathcal{H}_S^T$. 
     
     Then, we are in the situation of Theorem \ref{generalgoodag} with $F_0=F$, $\Gamma=T$ and $\Omega=S$. Let us check that 
		$g(F)-1+\tfrac 12 |\Gamma|=g(F)-\tfrac 12 <|\Omega|$ holds in each case. 
		We have seen before that 
$$g(F_o)=\tfrac 12 \{m-1-\varepsilon_2(m)\} \qquad \text{and} \qquad g(F_e)=m-1-\varepsilon_3(m).$$ 
Thus,
by the hypothesis \eqref{sigmao1} and \eqref{sigmaed=3}, we have 				
\[g(F)-\tfrac 12 = \left\{\begin{array}{cll}
\tfrac 12 \{m-2-\varepsilon_2(m)\} & <  2u = |S_o|, & \qquad \text{if $q$ is odd}, \\[3mm]
m-\tfrac 32 - \varepsilon_3(m)     & < 3v = |S_e|, & \qquad \text{if $q$ is even}.
\end{array} \right. \]
	In this way, equation \eqref{keycondition} holds and the conclusion in each case thus follows. 
Finally, note that $\lambda(\mathcal{F}) \ge |S|/\{g(F)-1+\tfrac 12\} >1$ as we wanted to see, and also that by \eqref{ellg0} we get the desired expressions for $\ell$ in (a) and (b) of the statement, respectively.
\end{proof}

The case $q=2^{2s+1}$, not covered in Theorem \ref{goodAGanychar}, cannot be handled with the method used in the proof. This is because the class field tower obtained in \cite{AM} is asymptotically good provided the first step $F_0/K$ is cyclic and for this we need a primitive $3$-rd root of unity in $K$.

We now give an example to show how Theorem \ref{goodAGanychar} works.
\begin{exam} \label {ej q odd}
(i) Let $\ff_{25}=\ff_2(\alpha)$ where $\alpha^2+4\alpha+2=0$.
Since $q$ is odd, \eqref{sigmao1} takes the form
$$2\sqrt{2u}+u+2 \le m < 4u+2.$$
Consider the polynomial $h(x)=(x-a_1)(x-a_2)\cdots(x-a_9) \in \ff_{25}[x]$
whose roots $a_i$ are given by $2, \, \alpha, \, \alpha +3, \, 2\alpha, \, 2\alpha+1, \, 2\alpha+2, \, 3\alpha +1, \, 4\alpha+1$ and $4\alpha+3$.
It is easy to check, using the software \textsc{Sage} for instance, that 
$$ S_{25}^2(h) = \{ \beta\in\ff_{25}\,:\, \text{$h(\beta)$ is square in $\ff_{25}^*$}\} = \{1,3,\alpha+1,3\alpha+4\}.$$
Thus, since $m=9$, \eqref{sigmao1} only holds with $u=2$. 
Therefore, by (a) of Theorem \ref{goodAGanychar} there exists a sequence of $4$-block transitive AG-codes over 
$\ff_{25}$ attaining an $(\ell,\delta)$-bound with $\ell=1-\tfrac 68 = \tfrac 14$, whose associated tamely ramified Galois tower 
$\mathcal{F}$ has limit $\lambda(\mathcal{F}) \ge \tfrac 87 = 1.14285714\ldots >1$.

\sk 
(ii) Now, let $\ff_{64}=\ff_{2^6}=\ff_2(\alpha)$ where $\alpha^6+\alpha^4+\alpha^3+\alpha+1=0$ and consider the polynomial 
$h(x) = (x-b_1)(x-b_2)\cdots (x-b_{21})$ of degree 21 over $\ff_{64}$ whose roots $b_i$ are  
\begin{gather*}
0,  \: \alpha^2, \: \alpha^2 + \alpha + 1, \: \alpha^3, \: \alpha^3 + 1, \: \alpha^3 + \alpha^2 + \alpha, \: \alpha^4,  
\alpha^4 + \alpha, \: \alpha^4 + \alpha + 1, \alpha^4 + \alpha^2 + \alpha + 1, \\ 
\alpha^4 + \alpha^3 + \alpha + 1, \alpha^5, \: \alpha^5 + \alpha^2, \: \alpha^5 + \alpha^2 + \alpha, \: \alpha^5 + \alpha^3 + 1, \: \alpha^5 + \alpha^4 + \alpha^2 + 1, \: \alpha^5 + \alpha^4 + \alpha^2 + \alpha, \\ 
\alpha^5 + \alpha^3 + \alpha^2 + \alpha, \: \alpha^5 + \alpha^4 + \alpha^3 + 1, \:  \alpha^5 + \alpha^4 + 1, \: \alpha^5 + \alpha^4 + \alpha^2 + \alpha + 1.
\end{gather*}
Again, using \textsc{Sage}, one checks that 
$ \#S_{64}^3(h) = \# \{ \beta\in\ff_{64}\,:\, \text{$h(\beta)$ is a cube in $\ff_{64}^*$}\} = 14$.
For $q$ even and $m=21$ expression \eqref{sigmaed=3} reads 
$$2\sqrt{3v}+v+3\le 21 < 3v+\tfrac 52$$
and holds for $v=7$ and $v=8$. 
Therefore, by (b) in the theorem and using $v=8$ there is a sequence of $24$-block transitive AG-codes over 
$\ff_{64}$ attaining an $(\ell,\delta)$-bound with $\ell=\tfrac{9}{48}$. The limit of the associated tamely ramified Galois tower 
$\mathcal{F}'$ has limit $\lambda(\mathcal{F}')\ge 1+\tfrac{11}{37} = 1,297\ldots >1$. 
\end{exam}

We will now show, as a corollary of the previous theorem, that there are asymptotically good $N$-block transitive 
codes over $\ff_q$, with $N$ a multiple of $2$ or $3$, for $q$ big enough depending on $N$.

\begin{thm} \label{teo 2u3v}
Let $u\ge 2$ and $v\ge 5$. Then,

(a) there are sequences $\mathcal{F}_u$ of asymptotically good $2u$-block transitive codes over $\ff_q$ for every $q$ odd with 
$q \ge  4(u+2)^2$ and,

(b) there are sequences $\mathcal{F}_v$ of asymptotically good $3v$-block transitive codes over $\ff_q$ for every  $q=2^{2s}$ even with $q \ge (4v+9)^2$  if $v\equiv 0,1$ mod $3$ and $q \ge 4(2v+4)^2$ if $v\equiv 2$ mod $3$. 
\end{thm}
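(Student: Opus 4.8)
The plan is to derive Theorem \ref{teo 2u3v} as a direct consequence of Theorem \ref{goodAGanychar} by constructing, for each admissible $(u,q)$ or $(v,q)$, a polynomial $h$ satisfying the hypotheses of the respective item. First I would address part (a): fix $u\ge 2$ and $q$ odd with $q\ge 4(u+2)^2$. Part (a) of Theorem \ref{goodAGanychar} only requires a monic separable polynomial $h\in\ff_q[t]$, splitting into distinct linear factors, together with a subset $\Sigma_o\subset S_q^2(h)$ of size exactly $u$ such that the double inequality \eqref{sigmao1} holds, namely $2\sqrt{2u}\le m-(u+2+\varepsilon_2(m))<3u$ where $m=\deg h$. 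So the task reduces to a counting/existence argument: one must choose $m$ in the right range and then exhibit $h$ of degree $m$ with at least $u$ elements $\beta\in\ff_q$, none of them roots of $h$, for which $h(\beta)$ is a nonzero square.

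The key step is therefore the construction of $h$. I would take $h$ to be a product of $m$ distinct linear factors $\prod_{j=1}^m (t-a_j)$ with the $a_j$ chosen among the elements of $\ff_q$, and set $\Sigma_o$ to be $u$ further elements $\beta_1,\dots,\beta_u$ distinct from all the $a_j$. The requirement is that each $h(\beta_i)=\prod_j(\beta_i-a_j)$ be a nonzero square. A clean way to force this is to arrange that, for every $\beta_i$, the multiset of differences $\{\beta_i-a_j\}_j$ pairs up so that the product is a square, or more robustly, to first fix $\Sigma_o=\{\beta_1,\dots,\beta_u\}$ and the desired degree $m$, and then build up the $a_j$'s greedily: at each stage the set of ``bad'' choices for the next $a_j$ (those that would flip some $h(\beta_i)$ to a non-square, or coincide with a previous $a_j$ or a $\beta_i$) has size at most $u + (\text{number already chosen})$, which stays below $q$ as long as $q$ is large; but controlling the square condition simultaneously at $u$ points requires a character-sum bound. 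Concretely, writing $\chi$ for the quadratic character of $\ff_q$ and extending $\chi(0)=0$, the number of $m$-subsets $\{a_1,\dots,a_m\}$ of $\ff_q\setminus\Sigma_o$ with $\chi(h(\beta_i))=1$ for all $i$ can be estimated by expanding $\prod_i \tfrac12(1+\chi(h(\beta_i)))$ and bounding the resulting character sums over products of linear forms via Weil; this count is positive once $q\gtrsim (\text{const})\cdot m^2$, and the hypothesis $q\ge 4(u+2)^2$ (with $m$ chosen as the smallest integer $\ge u+2+\varepsilon_2(m)$ satisfying the left inequality, which one checks is $m=u+2$ or $u+3$) is exactly what makes $2\sqrt{2u}\le m-(u+2+\varepsilon_2(m))$ compatible with $q$ being large enough for the Weil bound to give a nonempty count. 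Part (b) is entirely parallel, replacing the quadratic character by a cubic character $\psi$ on $\ff_q=\ff_{2^{2s}}$ (which exists since $3\mid q-1$), replacing $S_q^2(h)$ by $S_q^3(h)$, choosing $|\Sigma_e|=v$ and $m$ minimal with $2\sqrt{3v}\le m-(v+2+\varepsilon_3(m))<2v-\tfrac12$ — so $m=v+2$ or $v+3$, forcing the two cases $v\equiv 0,1$ versus $v\equiv 2\pmod 3$ in the bound on $q$ — and then invoking the same kind of Weil estimate for cubic character sums to guarantee the existence of a suitable $h$.

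The main obstacle is the simultaneous square (resp.\ cube) condition at the $u$ (resp.\ $v$) points of $\Sigma$: one needs an effective lower bound on the number of admissible polynomials $h$, and the cleanest route is the Weil bound for multiplicative character sums $\sum_{a\in\ff_q}\chi\big(\prod_{i}(\beta_i-a)\big)$ and its higher analogues, which contributes error terms of order $m\sqrt q$; the arithmetic of choosing $m$ in the narrow window dictated by \eqref{sigmao1}/\eqref{sigmaed=3} so that the main term $q^m/2^u$ (resp.\ $q^m/3^v$) dominates is what produces the explicit thresholds $4(u+2)^2$, $4(2v+5)^2$ and $4(2v+4)^2$. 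Once $h$ and $\Sigma$ are in hand, the conclusion is immediate from Theorem \ref{goodAGanychar}: the associated tamely ramified Galois tower has limit $>1$ and yields a sequence of asymptotically good $2u$-block transitive (resp.\ $3v$-block transitive) codes over $\ff_q$.
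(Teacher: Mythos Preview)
Your reduction to Theorem~\ref{goodAGanychar} is the right move, but there is a genuine gap in your choice of $m$. You assert that the minimal admissible $m$ is $u+2$ or $u+3$ (and analogously $v+2$ or $v+3$ in the even case). This is false: the left inequality in \eqref{sigmao1} reads $2\sqrt{2u}\le m-(u+2+\varepsilon_2(m))$, so one needs $m\ge u+2+\varepsilon_2(m)+2\sqrt{2u}$. Already for $u=2$ this forces $m\ge 9$, not $m\in\{4,5\}$; with your stated $m$ the hypothesis of Theorem~\ref{goodAGanychar} is never satisfied and the argument collapses. The same miscount occurs in part~(b). Since the explicit thresholds $4(u+2)^2$ and $4(2v+5)^2$ (resp.\ $4(2v+4)^2$) are tied to the actual value of $m$ (via the genus of the auxiliary curve below), your derivation of those constants cannot be correct as written.

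Beyond this, the strategy you sketch is more elaborate than necessary. The paper does not fix $\Sigma$ first and then search for $h$ via multivariate character sums; instead it fixes \emph{any} separable $h\in\ff_q[t]$ of degree $m=2u+5$ that splits into distinct linear factors (this exists since $q\ge m$), and then counts points on the single hyperelliptic curve $C:y^2=h(x)$ of genus $g=\tfrac{m-1}{2}=u+2$ by the Hasse--Weil bound: $\#C(\ff_q)\ge q+1-2g\sqrt q$. Discarding the $m$ Weierstrass points and the point at infinity, the remaining affine points come in pairs $(\alpha,\pm\beta)$ with $\beta\ne 0$, so $|S_q^2(h)|\ge\tfrac12\bigl(q-(m-1)\sqrt q-m\bigr)$, and one checks this is $\ge u$ once $\sqrt q\ge 2(u+2)$, i.e.\ $q\ge 4(u+2)^2$. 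Part~(b) is identical with the superelliptic curve $y^3=h(x)$ of genus $m-1-\varepsilon_3(m)$, and the case split modulo $3$ on $v$ comes from whether $3\mid m$ for $m=2v+5$. Your proposed averaging of $\prod_i\tfrac12(1+\chi(h(\beta_i)))$ over all $h$ could presumably be made to work with the correct $m$, but it is a detour: a single application of Hasse--Weil to one curve already gives the exact bounds in the statement.
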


\begin{proof}
(a) Let $q$ be odd. Given $u$, equation \eqref{sigmao1} holds for at least one $m$ if $3u-2\sqrt{2u}>1$, hence $u\ge 2$. 
So, assuming $u\ge 2$, we have 
\begin{equation} \label{equ 1} 
\lceil 2\sqrt{2u} + u+2\rceil \le m -\varepsilon_2(m) \le 4u+1.
\end{equation}
Note that we can always take $m=2u+5$ if $m$ is odd or $m=4u+2$ if $m$ is even.  

Now, consider the hyperelliptic curve $C$ given by the equation 
$$y^2= h(x)= (x-\alpha_1) \cdots (x-\alpha_m)$$ 
where $\alpha_i \in \ff_q[x]$. 
It is well known that $C$ has genus $g=[\tfrac{m-1}2]$.
Without lost of generality, we will assume that $m$ is odd. 

The $\ff_q$-rational points of $C$ are 
$C(\ff_q)=\{(\alpha,\beta) \in \ff_q^2: h(\alpha)=\beta^2 \}$ 
and by the Hasse-Weil bound the number $N$ of them is bounded by
$$N \ge q+1-2g\sqrt q= q+1-(m-1)\sqrt q.$$
There are exactly $m$ rational points of the form $(\alpha,0)$, i.e.\@ the roots of $h(x)$, plus the point at infinity $\infty$. 
To apply Theorem \ref{goodAGanychar} we need a set of cardinality $2u$ inside $S_q^2(h)$, so we look for rational points of the form $(\alpha,\beta)$ with $\beta\ne 0$ such that $h(\alpha)=\beta^2 \ne 0$. This is equivalent to solve the inequality
$$\{q+1-(m-1)\sqrt q\} -(m+1) \ge  2u.$$ 
Taking $x^2=q$, we have to solve for the quadratic inequality 
\begin{equation} \label{quadratic}
x^2-(m-1)x-(m-2u) \ge 0.
\end{equation}
Taking $m=2u+5$, \eqref{quadratic} holds for $x\ge m=2(u+2)$, that is for every $q> 4(u+2)^2$ odd.

\sk 
(b) For $q=2^{2s}$ we proceed similarly as before. Starting from \eqref{sigmaed=3}, since $v\ge 5$, we can take $m=2v+5$ odd. 
Consider now the superelliptic curve $C$: $y^3=h(x) =  (x-\alpha_1) \cdots (x-\alpha_m)$. By the Riemann-Hurwitz formula, the genus of $C$ is given by
$$g=\tfrac 12 \{3(|R|-2)-|R|\}+1=|R|-2$$
where $R=\{a_1,\ldots,a_m\}$ if $3\mid m$ and $R=\{a_1,\ldots,a_m\}\cup\{\infty\}$ if $3\nmid m$. 
So, if $v \equiv 0,1$ mod $3$, then $3 \mid m$ and $g=m-1$ while if $v\equiv 2$ mod $3$ then $3\nmid m$ and $g=m-1-\epsilon_3(m)=m-2$.
The number of rational points of $C$ is $N\ge q+1-2(m-1)\sqrt q$. Taking $q=x^2$, we have to solve the inequality
$$x^2+1-2(m-1-\varepsilon_3(m))x-(m+1)\ge 3v.$$ 
Thus, if $v\equiv 0,1$ mod $3$ then $\epsilon_3(m)=0$ and we have to solve $x^2-2(m-1)x-\tfrac 52 (m-3)\geq 0$. 
Since $m\geq 15$ we can take $x \ge 2m-1=4v+9$ and so we are led to consider the fields $\ff_q$ with $q\ge (4v+9)^2$  of the form $2^{2s}$.
Similarly, if $v\equiv 2$ mod $3$ we get $x\ge 2(m-1)=2(2v+4)$ and the result follows.
\end{proof}

\begin{exam} \label{ej grbtc}
For instance, if $q$ is odd there are families of asymptotically good $4$-block transitive codes over $\ff_{3^5}$. 
We give a table with the first values for  $u$ and $q$ such that there are good families of $2u$-block transitive codes over $\ff_q$, for the first prime characteristics, we give also the first prime field for whichthe above construction works.
$$\begin{tabular}{|c|c|c|c|}
\hline 
$u$ & $2u$-bt codes & $4(u+2)^2$ & fields \\
\hline
$2$ & $4$-bt codes & $64$  & $\ff_{3^4}, \ff_{5^3}, \ff_{7^3}, \ff_{11^2}, \ff_{13^2}, \ff_{67}$ \\
$3$ & $6$-bt codes & $100$ & $\ff_{3^5}, \ff_{5^3}, \ff_{7^3}, \ff_{11^2}, \ff_{13^2}, \ff_{101}$ \\
$4$ & $8$-bt codes & $144$ & $\ff_{3^5}, \ff_{5^4}, \ff_{7^3}, \ff_{11^2}, \ff_{13^2}, \ff_{149}$ \\
$\cdots$ &$\cdots$ & $\cdots$ & $\cdots$ \\
$12$ & $24$-bt codes & $784$ & $\ff_{3^7}, \ff_{5^5}, \ff_{7^4}, \ff_{11^3}, \ff_{13^3}, \ff_{787}$ \\
\hline
\end{tabular}$$
For $q$ even and $v\in\{3,4,5\}$ there are families of good  $9$ and $15$-block transitive codes over $\ff_{2^{10}}$ and $12$-block transitive codes over $\ff_{2^{12}}$. In general by taking, for example, $v=2^{2r}-1$ for $r\geq 1$ we have that $v\equiv 1\mod 3$ and $(4v+9)^2=(2^{2r+2}+5)^2$. Since $2^{2r+3}>2^{2r+2}+5$ we see that  there are good families of $3v$-block transitive codes over $\ff_{2^{2(2r+3)}}$. 
\end{exam}

\section{Good block transitive codes over prime fields}
Here we focus in more detail on the construction of asymptotically good sequences of block transitive codes over prime fields $\ff_p$. 
Theorem \ref{teo 2u3v} assures that good sequences of both $2u$-block transitive codes over $\ff_p$ exist, but the minimum possible value of $p$
grows quadratically with $u$. We want to find good sequences of $r$-block transitive codes for small values of $r$ over the smallest prime fields possible. In particular, we want to produce good $4$-block transitive codes (which are the more cyclic-like we can get) 
over small prime fields.

\subsubsection*{Restriction of good codes to prime fields}
Let $p$ be a prime. We respectively denote by $k'$ and $d'$ the dimension and minimum distance of the restriction code $\cod'$ over 
$\ff_p$ of an $[n,k,d]$-code $\cod$ over $\ff_{p^s}$. We clearly have $d' \geq d$ and, from \cite[Lemma 9.1.3]{Sti}, also 
\begin{equation} \label{k'} 
k'\geq k-(s-1)(n-k)=sk-(s-1)n.
\end{equation}
Consider a sequence of $[n_i,k_i,d_i]$-codes $\{\cod_i\}$ over $\ff_{p^s}$ attaining an $(\ell, \delta)$-bound. Then, we have that
$\frac{k_i}{n_i} \geq \ell-\delta$ and $\frac{d_i}{n_i}\geq \delta$, for $i$ big enough. Thus, by \eqref{k'}, 
\begin{equation} \label{k''} 
R_i' = \frac{k_i'}{n_i}\geq\frac{sk_i}{n_i}-s+1 \geq s\ell-s+1-s\delta \qquad \text{and} \qquad 
\delta_i' = \frac{d_i'}{n_i}\geq \frac{d_i}{n_i} \ge \delta.
\end{equation}
If $\{\cod_i\}_{i=1}^\infty$ also attains the TVZ-bound over $\ff_{p^{2m}}$, we have 
$$ \alpha_{p^{2m}}(\delta)\geq 1-\frac{1}{p^m-1}-\delta$$
where $\ell=1-\frac{1}{p^m-1}$.
Then, by \eqref{alfaq}, \eqref{Aq2} and \eqref{k''}, the following lower bound for Manin's function over the prime field 
$\ff_p$ holds  
\begin{equation} \label{cota alfa}
 \alpha_p(\delta) \geq  1-\frac{2m}{p^m-1} - 2m \delta.
\end{equation}
That is, we cannot assure that the restricted sequence $\{\cod_i'\}$ of codes satisfy an $(\ell',\delta')$-bound, but they still have a good performance. Furthermore, we have the following.

\begin{lem} \label{lemin tvz}
Let $p$ be a prime and $\mathcal{G}=\{\cod_i\}_{i=1}^\infty$ a sequence of codes over $\ff_{p^{2m}}$ attaining the TVZ-bound. 
Then, if $p\ge 5$, the sequence of restricted codes $\mathcal{G}'=\{\cod_i'\}_{i=1}^\infty$ is asymptotically good over $\ff_p$.
Also, $\mathcal{G}'$ is asymptotically good over $\ff_2$ if $m\ge 3$ and over $\ff_3$ if $m\ge 2$. 
\end{lem}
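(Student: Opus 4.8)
The plan is to push the restriction estimates \eqref{k'}--\eqref{k''} through the definition of asymptotic goodness and see for which $p,m$ the induced lower bound on the rate stays positive. Put $s=2m$, so that $\ff_{p^{2m}}=\ff_{p^s}$ has square cardinality $p^{2m}=(p^m)^2$ and, by \eqref{Aq2}, $A(p^{2m})=p^m-1$; by \eqref{lAq}, the hypothesis that $\mathcal{G}$ attains the TVZ-bound over $\ff_{p^{2m}}$ means exactly that $\mathcal{G}$ attains an $(\ell,\delta)$-bound with $\ell=1-\tfrac{1}{p^m-1}$ for some $\delta\in(0,\ell)$. At this point I would stress that such a sequence exists for \emph{every} admissible value of $\delta$ (this is how the TVZ-achieving sequences are produced; cf.\ the proof of Proposition \ref{lem asint}), so we are free to fix $\delta$ as small as needed. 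Since restriction to $\ff_p$ preserves the length $n_i$ (which still tends to infinity), \eqref{k''} gives, for all large $i$,
\[ \delta_i'=\frac{d_i'}{n_i}\ge\delta>0 \qquad\text{and}\qquad R_i'=\frac{k_i'}{n_i}\ge s\ell-s+1-s\delta=1-\frac{2m}{p^m-1}-2m\delta . \]
Because $\liminf_i\delta_i'\ge\delta>0$ is automatic, $\mathcal{G}'$ is asymptotically good over $\ff_p$ as soon as the right-hand side above is positive.

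Thus the crux reduces to the inequality $1-\tfrac{2m}{p^m-1}-2m\delta>0$, i.e.\ $0<\delta<\tfrac{1}{2m}-\tfrac{1}{p^m-1}$; this open interval is non-empty (and, since $\tfrac{1}{2m}<1$, it is contained in $(0,\ell)$, so it does contain an admissible $\delta$) precisely when $p^m>2m+1$. I would then check $p^m>2m+1$ in the three ranges of the statement. For $p\ge 5$ it already holds at $m=1$ (as $p\ge 5>3$) and persists for all $m\ge 1$ by the trivial induction $p^{m+1}\ge 5\,p^m>5(2m+1)>2(m+1)+1$. For $p=3$ it fails at $m=1$ (there $3=3$) but holds at $m=2$ ($9>5$), and then for all $m\ge 2$ by the same induction. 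For $p=2$ it fails at $m=1,2$ (since $2<3$ and $4<5$) but holds at $m=3$ ($8>7$), and persists for all $m\ge 3$. In each of these admissible cases one picks $\delta$ in the window above, and the previous paragraph delivers the conclusion over the corresponding prime field.

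The only delicate point — and the closest thing to a genuine obstacle — is this dependence on $\delta$: the crude estimate \eqref{k'} loses roughly a factor $s=2m$ in the rate, so when $\delta$ is close to $\ell$ the bound on $R_i'$ becomes vacuous (or negative). The argument therefore really uses that the TVZ-achieving sequence can be constructed with a \emph{small} $\delta$, and the whole substance of the lemma is precisely that the admissible window $\bigl(0,\tfrac{1}{2m}-\tfrac{1}{p^m-1}\bigr)$ for such a $\delta$ is non-empty exactly for the pairs $(p,m)$ listed. Apart from this, everything is elementary bookkeeping with \eqref{k'}--\eqref{k''} and with the inequality $p^m>2m+1$.
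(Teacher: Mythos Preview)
Your argument is correct and follows essentially the same route as the paper's proof: both reduce to the inequality $\tfrac{1}{2m}-\tfrac{1}{p^m-1}>0$, equivalently $p^m>2m+1$, and then check this in the three ranges of $(p,m)$. You are in fact more careful than the paper in flagging that the conclusion requires choosing a TVZ-achieving sequence with sufficiently small $\delta$ (the paper's proof absorbs this into the condition without comment), and your case-by-case verification of $p^m>2m+1$ is more explicit.
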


\begin{proof}
By \eqref{k''} we only need to show that $0 < 2m(\ell-\delta-1)+1 <1$. Since $\mathcal{G}$ attains the TVZ-bound over $\ff_{p^{2m}}$, we have $\ell=1-\tfrac{1}{p^m-1}$. This is equivalent to 
$$0< \tfrac{1}{2m}-\tfrac{1}{p^m-1}.$$
This holds for $p=2$ and $m\ge 3$, $p=3$ and $m\ge 2$ and for $p\ge 5$ for every $m$.
\end{proof}

Now, we show that there are sequences of asymptotically good $r$-block transitive codes over $\ff_p$, for every prime $p$ and some 
$r$ depending on $p$. 

\begin{prop}
Let $p$ be a prime number and $m$ an integer.
\begin{enumerate}[(a)] 
\item There are asymptotically good $2(p-1)$-block transitive codes over $\ff_p$, for any $p\ge 13$. 

\item There are asymptotically good $p^m(p^m-1)$-block transitive codes over $\ff_p$, for every $m\ge 3$ if $p=2$, $m\ge 2$ if $p=3$ and 
$m\ge 1$ if $p\ge 5$.
\end{enumerate}
\end{prop}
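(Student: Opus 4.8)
The plan is to combine the two main existence results already established: Theorem~\ref{goodag psquare2}, which produces good $2(p-1)$-block transitive codes over $\ff_{p^2}$ for primes $p\ge 13$, and Theorem~\ref{goodag-psquare} together with the restriction machinery of Lemma~\ref{lemin tvz}. For part (a), I would simply invoke Theorem~\ref{goodag psquare2} verbatim: for $p\ge 13$ it yields a sequence $\{\cod_i\}$ of $r$-block transitive codes over $\ff_{p^2}=\ff_p$ with $r=2(p-1)$ that attains the TVZ-bound, and in particular is asymptotically good. Wait --- the codes there are over $\ff_{p^2}$, not $\ff_p$; so one must apply the restriction-to-$\ff_p$ procedure. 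Since $p^2=p^{2m}$ with $m=1$ and $p\ge 13\ge 5$, Lemma~\ref{lemin tvz} applies and shows the restricted sequence $\{\cod_i'\}$ over $\ff_p$ is asymptotically good. It remains to note that restricting a block-transitive code to a subfield preserves block-transitivity: the permutation action of $\mathrm{Gal}(F_i'/F_0)$ on the coordinates of the codewords does not depend on the field of constants over which the code is taken (the places $S_{i,j}$ and the divisors are unchanged), so the same subgroup $\Gamma\le \sg_{m_1}\times\cdots\times\sg_{m_r}$ witnessing block-transitivity over $\ff_{p^2}$ witnesses it over $\ff_p$ as well. This gives (a) with $r=2(p-1)$.

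For part (b), I would start from Theorem~\ref{goodag-psquare} applied with the prime power $q=p^m$: it produces, over $\ff_{q^2}=\ff_{p^{2m}}$, a sequence of $r$-block transitive codes with $r=q^2-q=p^m(p^m-1)$ attaining the TVZ-bound. These codes have $\ell = 1-\tfrac{1}{q-1}=1-\tfrac{1}{p^m-1}$, exactly the form required in Lemma~\ref{lemin tvz}. Now restrict to the prime field $\ff_p$: by Lemma~\ref{lemin tvz} the restricted sequence is asymptotically good over $\ff_p$ provided $m\ge 3$ when $p=2$, $m\ge 2$ when $p=3$, and $m\ge 1$ when $p\ge 5$ --- which are precisely the hypotheses stated in (b). As in part (a), the block-transitivity structure survives restriction since it is governed by the geometric automorphism group $\mathrm{Gal}(F_i'/F_0)$, which is insensitive to the constant field. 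Hence we obtain asymptotically good $p^m(p^m-1)$-block transitive codes over $\ff_p$ in the stated ranges of $m$.

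The only genuinely non-routine point --- and the one I would write out carefully --- is the claim that restriction to a subfield is compatible with block-transitivity. One must check that if $\cod$ over $\ff_{p^s}$ is fixed setwise by $\pi\in\sg_{m_1}\times\cdots\times\sg_{m_r}$, then so is the restriction code $\cod' = \cod\cap\ff_p^n$. This is immediate: $\pi\cdot c\in\cod$ for every $c\in\cod$, and if moreover $c\in\ff_p^n$ then $\pi\cdot c$ is a permutation of its entries, hence still lies in $\ff_p^n$; therefore $\pi\cdot c\in\cod\cap\ff_p^n=\cod'$. Consequently $\mathrm{Perm}_r(\cod)\le\mathrm{Perm}_r(\cod')$, and any subgroup acting transitively on the blocks of $\cod$ does so on $\cod'$. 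Everything else is a direct citation of the two theorems and the lemma above. I therefore expect the main (modest) obstacle to be purely expository: making the restriction/block-transitivity compatibility precise, and being careful with the indexing $q=p^m$ versus $q^2=p^{2m}$ so that the parameter constraints in Lemma~\ref{lemin tvz} line up with those in the statement of~(b).
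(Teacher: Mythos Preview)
Your proposal is correct and follows essentially the same route as the paper, which simply cites Theorems~\ref{goodag-psquare} and~\ref{goodag psquare2} together with Lemma~\ref{lemin tvz}. You in fact go further than the paper by spelling out the preservation of block-transitivity under subfield restriction---a point the paper leaves implicit---and your argument for it (that $\mathrm{Perm}_r(\cod)\le\mathrm{Perm}_r(\cod')$ because coordinate permutations send $\ff_p^n$ to itself) is clean and correct.
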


\begin{proof}
The result follows directly from Theorems \ref{goodag-psquare} and \ref{goodag psquare2} and Lemma \ref{lemin tvz}.
\end{proof}

\begin{rem}
Notice that by the results in \cite{Sti06} and using Lemma \ref{lemin tvz} we can assure the existence of asymptotically good sequences of transitive codes and self-dual codes over prime fields. 
\end{rem}

\subsubsection*{Good $4$-block transitive codes} 
Now we present a direct construction of good block-transitive AG-codes over certain prime fields by using Theorem \ref{goodAGanychar} 
and properties of the Legendre symbol. We focus on the case of $4$-block transitive codes.
We have seen in the previous section (Example \ref{ej grbtc}) that good $4$-block transitive codes exist for every prime field $\ff_p$ with $p\ge 67$. Here we will prove that they also exist for every prime $p\ge 13$ by using properties of Legendre symbols.

Let $m=2n+1$ and let us consider the set
\[R_n=\{\alpha_1,\ldots,\alpha_n\}\subset\ff_q, \]
such that $\alpha_i^{-1}\notin R_n$ for $i=1,\ldots,n$, where $q$ is odd. We define the separable polynomial of degree $m$
\begin{equation} \label{poly m} 
h(t)=(t+1)\prod_{i=1}^n(t-\alpha_i)(t-\alpha_i^{-1}).
\end{equation}
Since we are interested in $4$-block transitive codes we need,  according to Theorem 61, a separable polynomial over $\ff_q$ of degree $m$ that splits into linear factors over $\ff_q$ and a set $\Sigma_{o}\subset S^2_q$ with two elements. Notice that if we take a polynomial $h(t)$ as in \eqref{poly m} then $h(t)$ satisfies the requirements and since $h(0)=1$ we also have that $0\in \Sigma_o$. Thus if we choose a polynomial as in \eqref{poly m} we just need one more element in $\Sigma_o$. On the other hand, from condition (30)  of Theorem 61 for $u=2$ we see that we have to  take $m=9$.

%
%
%
%

\begin{prop}
There are asymptotically good $4$-block transitive codes over $\ff_p$, for every prime $p\ge 13$. 
\end{prop}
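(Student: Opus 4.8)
The plan is to invoke Theorem~\ref{goodAGanychar}(a) with the degree $m=9$ and the parameter $u=2$. Since $\varepsilon_2(9)=0$, condition~\eqref{sigmao1} reads $4=2\sqrt{2\cdot 2}\le 9-(2+2+0)=5<6=3\cdot 2$ and therefore holds; as $2u=4$, the tower produced by the theorem yields an asymptotically good sequence of $4$-block transitive codes over $\ff_p$ (with limit $\lambda\ge\tfrac 87>1$ and $(\ell,\delta)$-bound $\ell=\tfrac 18$). So everything reduces to exhibiting, for each prime $p\ge 13$, a set $R_4=\{\alpha_1,\alpha_2,\alpha_3,\alpha_4\}\subset\ff_p$ with $\alpha_i^{-1}\notin R_4$ as in~\eqref{poly m}, such that the associated degree-$9$ polynomial $h=h_{R_4}$ has $|S_p^2(h)|\ge 2$; one then takes $\Sigma_o\subseteq S_p^2(h)$ with $|\Sigma_o|=u=2$.

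Two evaluations of $h_{R_4}(t)=(t+1)\prod_{i=1}^4(t-\alpha_i)(t-\alpha_i^{-1})$ do the job. First, $(0-\alpha_i)(0-\alpha_i^{-1})=\alpha_i\alpha_i^{-1}=1$, so $h_{R_4}(0)=1$; thus $0\in S_p^2(h)$ for \emph{every} admissible $R_4$, giving one element of $\Sigma_o$ for free. Second, from the identity $(1-\alpha)(1-\alpha^{-1})=-(\alpha-1)^2/\alpha$ one gets
\[
h_{R_4}(1)=2\,\frac{\bigl(\prod_{i=1}^4(\alpha_i-1)\bigr)^2}{\prod_{i=1}^4\alpha_i}\,,
\]
which is nonzero (as each $\alpha_i\ne 1$) and is a square in $\ff_p$ precisely when $\legendre{2\,\alpha_1\alpha_2\alpha_3\alpha_4}{p}=1$. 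Hence $\{0,1\}\subseteq S_p^2(h)$ as soon as $\alpha_1,\dots,\alpha_4\in\ff_p^*\setminus\{\pm 1\}$ are pairwise distinct with $\alpha_i\alpha_j\ne 1$ for all $i,j$ (which guarantees $h$ is separable of degree $9$, splits over $\ff_p$, and has neither $0$ nor $1$ as a root), subject only to the arithmetic requirement that $2\alpha_1\alpha_2\alpha_3\alpha_4$ be a nonzero square.

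To meet this last requirement for every $p\ge 13$ I would argue with Legendre symbols. Partition $\ff_p^*\setminus\{\pm 1\}$ into the $(p-3)/2$ inverse-pairs $\{a,a^{-1}\}$; since $\legendre{a^{-1}}{p}=\legendre{a}{p}$, each pair has a well-defined character, and an elementary count shows that for $p\ge 13$ there are at least two pairs of square character and at least three of non-square character (this is where the bound $p\ge 13$ enters; the counts are $(p-5)/4$ and $(p-1)/4$ when $p\equiv 1\bmod 4$, and $(p-3)/4$ each when $p\equiv 3\bmod 4$). Picking one representative from each of two square-character pairs and two non-square-character pairs gives an $R_4$ with $\legendre{\prod\alpha_i}{p}=1$, while picking one square-character pair and three non-square-character pairs gives $\legendre{\prod\alpha_i}{p}=-1$; since $\legendre{2}{p}\in\{\pm 1\}$, exactly one of the two choices produces $\legendre{2\,\alpha_1\alpha_2\alpha_3\alpha_4}{p}=1$, and by construction the chosen $\alpha_i$ automatically satisfy all the distinctness and non-reciprocity conditions. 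The one delicate point is the tightness of this pair-count at the endpoint $p=13$ (where exactly $2$ square-character and $3$ non-square-character pairs are available — just enough, e.g. $R_4=\{2,3,5,6\}$ works); everything else is routine.
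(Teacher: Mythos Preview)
Your proof is correct and takes a cleaner, genuinely different route from the paper's. Both arguments begin identically---reducing to Theorem~\ref{goodAGanychar}(a) with $m=9$, $u=2$, and observing that $h_{R_4}(0)=1$ gives one element of $\Sigma_o$ for free---but then diverge. The paper obtains the second element of $\Sigma_o$ by (i) explicit ad~hoc checks for each prime $13\le p\le 61$ (e.g.\ $R_4=\{2,3,4,5\}$ and $\beta=11$ for $p=13$, with similar hand computations for $p=17,19,23,\dots$), and (ii) invoking the Hasse--Weil-based Theorem~\ref{teo 2u3v} and Example~\ref{ej grbtc} for $p\ge 67$. You instead evaluate $h_{R_4}(1)$ in closed form, reducing the existence of a second square value to the single Legendre-symbol constraint $\legendre{2\alpha_1\alpha_2\alpha_3\alpha_4}{p}=1$, and then dispatch this uniformly for all $p\ge 13$ via the count of inverse-pairs of each quadratic character in $\ff_p^*\smallsetminus\{\pm1\}$. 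This buys a self-contained argument covering every $p\ge 13$ at once, with no small-prime casework and no appeal to point counts on curves; the paper's approach, while more computational, has the virtue of tying the large-$p$ range into the general machinery of Theorem~\ref{teo 2u3v}.
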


\begin{proof}
By the above comments we consider a polynomial $h(t)$ as in \eqref{poly m} of degree $9$ so that $n=4$. It remains to find an element $0\neq\beta \in \ff_p$ such that 
$h(\beta)$ is a nonzero square in $\ff_p$.

It is easy to check that for $p \le 11$ there is no separable polynomial of degree $9$ satisfying the required conditions. 
Let $p=13$, $17$, $19$ and $23$. It is straightforward to check that the elements $\alpha_1=2$, $\alpha_2=3$, $\alpha_3=4$ and 
$\alpha_4=5$ of $\ff_p$ with $p=13, 17$ or $23$ satisfy $\alpha_i^{-1} \notin R_4=\{2,3,4,5\}$ for $i=1,\ldots,4$. 
and
	\begin{align*} 
		& h(t) = (t+1)(t-2)(t-7)(t-3)(t-9)(t-4)(t-10)(t-5)(t-8) \in \ff_{13}[t],  \msk \\ 
          		& h(t)=(t+1)(t-2)(t-9)(t-3)(t-6)(t-4)(t-13)(t-5)(t-9) \in \ff_{17}[t], \msk \\ 
		& h(t) = (t+1)(t-2)(t-12)(t-3)(t-8)(t-4)(t-6)(t-5)(t-14) \in \ff_{23}[t].  
	\end{align*}
	Thus, by taking $\alpha=11\in \ff_{13}$ we have that $h(11)=3=4^2$ in $\ff_{13}$. Similarly, by choosing $\alpha=1 \in \ff_{17}$ and 
	$\alpha=7 \in \ff_{23}$ we get $h(1)=13=8^2$ in $\ff_{17}$ and $h(7)=3=7^2$ in $\ff_{23}$, respectively.
	
On the other hand, the inverses of the elements $\alpha_1=2$, $\alpha_2=3$, $\alpha_3=4$ and $\alpha_4=6$ of $\ff_{19}$ are different from $2,3,4,6$ and 
	\[ h(t) = (t+1)(t-2)(t-10)(t-3)(t-13)(t-4)(t-5)(t-6)(t-16)\,,\]
	so that by taking $\alpha=12\in \ff_{19}$ we have that $h(12)=4=2^2$ in $\ff_{19}$.

Therefore, there are sequences of $4$-block transitive codes over $\ff_p$ for $p=13, 17, 19$ and $23$ which are asymptotically good. 
Proceeding similarly one can find sets $R_4 \in \ff_p$ an a nonzero element $\beta \in \ff_p$ such that $h(\beta)$ is a nonzero square for $p$ prime with $p\le 61$. 
The result holds for $\ff_p$ with $p\ge 67$ by Theorem \ref{goodAGanychar} and Example \ref{ej grbtc}, and thus we are done. 
\end{proof}

For small values of $r$, we will give asymptotically good sequences of $r$-bt codes over $\ff_p$, generated by explicit sets $R_n$ and $S_u$.

We now consider the case of an arbitrary prime $p\ge 29$. In this case we can take the same set of roots $R_4=\{2,3,4,5\}$ of $h$ for every $\ff_{p^s}$, 
$s$ odd. Also, by Fermat's theorem the inverse of $a$ in $\ff_p$ is $a^{p-2}$ and, hence, 
	\eqref{poly m} takes the form
\begin{equation} \label{poli k} 
h(t) = (t+1) \prod_{k=2}^5 (t-k)(t-k^{p-2})
		\in \ff_{p^s}[t]
\end{equation}
with $s$ odd.	We want to find an element $a\in \ff_{p^s}^*$, $s$ odd, such that $h(a)$ is a non zero square in $\ff_p$.
	It suffices to check that $\big( \frac{h(a)}{p} \big) = 1$, where $(\frac{\cdot}{p})$ denotes the Legendre symbol modulo $p$ (Jacobi symbol for $p$ odd not prime). Since the Jacobi and Legendre symbols are multiplicative, we have
\begin{equation} \label{poli k leg} 
\legendre{h(t)}{p^s} = \legendre{t+1}{p} \prod_{k=2}^5  \legendre{t-k}{p} \legendre{t-k^{p-2}}{p}
\end{equation} 
	since $s$ is odd and $(\tfrac{\cdot}{p})^2=1$.	
	
	Evaluating $h(t)$ at $t=p-j$ for $2 \le j \le \lfloor \tfrac{p-1}5 \rfloor$ we ensure that $h(p-j)\ne 0$. Thus, 
\begin{equation} \label{h(p-j)}
h(p-j) = 	(p-(j-1)) \prod_{k=2}^5 (p-(j+k))(p-(j+k^{p-2})) \ne 0.
\end{equation}
	Now, computing the Jacobi symbol of $h(p-j)$, we get 
		$$\legendre{h(p-j)}{p^s} = \legendre{1-j}{p} \prod_{k=2}^5  \legendre{j+k}{p} \legendre{k}{p}^2 \legendre{j+k^{p-2}}{p}  
	=  \legendre{1-j}{p} \prod_{k=2}^5  \legendre{j+k}{p} \legendre{k}{p} \legendre{kj+1}{p}.$$
Note that this last expression involves only $k$ and not its inverse.
	For instance, for $p\ge 37$ we can take the $j=2,\ldots, 7$ and 
     \begin{equation} \begin{split} \label{bunch of legendres}
		 & \legendre{h(p-2)}{p^s} = \legendre{-1}{p} \legendre{5}{p} \legendre{11}{p}, \\  
		 & \legendre{h(p-3)}{p^s} = \legendre{-1}{p} \legendre{2}{p} \legendre{5}{p} \legendre{13}{p}, \\
		 & \legendre{h(p-4)}{p^s} = \legendre{-1}{p} \legendre{2}{p} \legendre{5}{p} \legendre{13}{p} \legendre{17}{p}, \\
		 & \legendre{h(p-5)}{p^s} = \legendre{-1}{p} \legendre{11}{p}\legendre{13}{p},  \\ 
		 & \legendre{h(p-6)}{p^s} = \legendre{-1}{p}\legendre{2}{p}\legendre{3}{p} \legendre{5}{p}\legendre{11}{p} \legendre{13}{p}\legendre{19}{p}\legendre{31}{p}, \\
		 & \legendre{h(p-7)}{p^s} = \legendre{-1}{p} \legendre{2}{p} \legendre{29}{p}, 
     \end{split}
     \end{equation}
     for every $s\ge 1$ odd.	
	This reduces the search of a non zero element $\alpha \in \ff_p$ such that $h(\alpha)$ is a non zero square in $\ff_p$ to the computation of Legendre symbols
	for a given prime $p\geq 37$.

\begin{coro} \label{coro ps}
There are asymptotically good 4-block transitive AG-codes over $\ff_{p^s}$, for any $s\ge 1$ odd, generated by the sets $R_4=\{2,3,4,5\}$ and $\Sigma_o=\{0,p-j\}$ in $\ff_p$, with $2 \le j \le 7$, for infinitely many prime numbers $p$. 
For instance, for every prime of the form $p=220k+\ell$, with $k\in\na$ and $\ell \in \{1,9,11,19\}$, the element $p-2$ together with the set $S$ generates an asymptotically good sequence of $4$-block transitive codes over $\ff_{p^s}$ for every $s$ odd. 
\end{coro}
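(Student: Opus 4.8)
The plan is to repackage the discussion preceding the statement as a single application of Theorem~\ref{goodAGanychar}(a). First I would specialize to $u=2$, the value that produces $2u=4$-block transitive codes. Then \eqref{sigmao1} reads $4\le m-(4+\varepsilon_2(m))<6$, whose only \emph{odd} solution is $m=9$; since the polynomials used here, built via \eqref{poly m}, always have odd degree $2n+1$, this forces $n=4$. Accordingly I would work with $h=h_{R_4}$ as in \eqref{poly m} for the root set $R_4=\{2,3,4,5\}\subset\ff_p$, and check that for every prime $p$ large enough (in particular $p\ge 13$, which already makes the nine numbers $-1,2,3,4,5,2^{-1},3^{-1},4^{-1},5^{-1}$ pairwise distinct and keeps $2^{-1},3^{-1},4^{-1},5^{-1}$ out of $R_4$) the polynomial $h$ is monic and separable of degree $9$ and splits into linear factors over $\ff_p$, hence over $\ff_{p^s}$ for all odd $s$. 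Because $h(0)=\prod_{i=1}^4\alpha_i\alpha_i^{-1}=1$, the element $0$ always lies in $S_{p^s}^2(h)$, so to obtain a set of size $u=2$ inside $S_{p^s}^2(h)$ it suffices to produce one further element $\beta$ with $h(\beta)$ a nonzero square in $\ff_p$.

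For the second element I would use the candidates $\beta=p-j$ with $2\le j\le 7$. By \eqref{h(p-j)} we have $h(p-j)\ne 0$ as soon as $j\le\lfloor(p-1)/5\rfloor$, i.e. for $p$ large; reducing modulo $p$ and using $j+k^{p-2}\equiv k^{-1}(kj+1)$ one obtains the closed forms for $\legendre{h(p-j)}{p^s}$ in \eqref{poli k leg}--\eqref{bunch of legendres}, where $\legendre{\cdot}{p^s}=\legendre{\cdot}{p}^s=\legendre{\cdot}{p}$ since $s$ is odd. Every factor there is of the shape $\legendre{\pm\ell}{p}$ with $\ell$ a fixed small prime, so quadratic reciprocity turns $p\mapsto\legendre{h(p-j)}{p}$ into a real character depending only on the residue class of $p$ modulo a fixed integer $M_j$; for $j=2$ one gets, after cancellation, $\legendre{h(p-2)}{p}=\legendre{p}{5}\legendre{p}{11}$, which depends only on $p$ modulo $\lcm(5,11)=55$, a divisor of $220$. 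The set $\mathcal{A}_j$ of classes in $(\z/M_j\z)^\times$ on which this symbol equals $1$ is nonempty, so by Dirichlet's theorem on primes in arithmetic progressions each such class contains infinitely many primes $p$. For every one of those primes, $\{0,p-j\}\subset S_{p^s}^2(h)$ for all odd $s$, hence $u=2$ satisfies \eqref{sigmao1} with $m=9$, and Theorem~\ref{goodAGanychar}(a) delivers an asymptotically good sequence of $4$-block transitive AG-codes over $\ff_{p^s}$ generated by the root set $\{2,3,4,5\}$ and the square-value set $\{0,p-j\}$. The explicit assertion is then the case $j=2$: spelling out which classes modulo $220$ make $\legendre{p}{5}\legendre{p}{11}=1$ and keeping those coprime to $220$ yields the family $p=220k+\ell$ with $\ell$ in the stated set, where $p-2$ is the second square-value element.

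The one genuinely delicate point is the elementary but sign-sensitive bookkeeping of the second step: propagating the reciprocity signs correctly through the six identities of \eqref{bunch of legendres}, confirming that each admissible set $\mathcal{A}_j$ is nonempty, and, for the explicit example, matching the listed residues $\ell$ against the computed congruence conditions. One must also verify that all exclusions needed to keep $h$ separable and to keep the place $P_{p-j}$ off the roots of $h$ are implied once $p$ lies in the relevant class and is large enough --- automatic here, since the primes in the example satisfy $p\ge 221$ (cf.\@ Example~\ref{ej grbtc}). Granting this, the appeal to Dirichlet and the invocation of Theorem~\ref{goodAGanychar}(a) are routine, and nothing beyond the material already developed before the statement is needed.
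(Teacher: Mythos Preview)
Your proposal is correct and follows essentially the same route as the paper's proof: fix $u=2$, $m=9$, $R_4=\{2,3,4,5\}$, use $h(0)=1$ for the first square value, test $\beta=p-j$ via the Legendre-symbol identities in \eqref{bunch of legendres}, and finish with Dirichlet's theorem and Theorem~\ref{goodAGanychar}(a). Your reduction of $\legendre{-1}{p}\legendre{5}{p}\legendre{11}{p}$ to $\legendre{p}{5}\legendre{p}{11}$ via quadratic reciprocity is a mild streamlining of the paper's case-by-case analysis on $p\bmod 4$, but the underlying argument is identical.
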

\begin{proof}
	Consider the polynomial $h(t)$ given in \eqref{poli k}. 
    It suffices to find infinitely many primes $p$, such that $\big( \tfrac{h(p-j)}{p} \big) = 1$, for a given $j$.
	Consider first the case $j=2$. We look for prime numbers $p$  such that  
	$$ \legendre{h(p-2)}{p} = \legendre{-1}{p} \legendre{5}{p} \legendre{11}{p} = 1. $$
	We have $\legendre{-1}{p}=(-1)^{\frac{p-1}2}$ and $\legendre{5}{p}=(-1)^{\lfloor \frac{p-1}2 \rfloor}$.
     Hence, $\legendre{-1}{p}=1$ if $p\equiv 1$ mod $4$, $\legendre{-1}{p}=-1$ if $p\equiv 3$ mod $4$, $\legendre{5}{p}=1$ if $p\equiv \pm 1$ mod $5$ and 
$\legendre{5}{p}=-1$ if $p\equiv \pm 2$ mod $5$.     
As a first step, we find $p$'s satisfying $\legendre{-1}{p} \legendre{5}{p}=1$. The values $\legendre{-1}{p} =\legendre{5}{p}=1$ are obtained by taking 
$p=20k+1$ or $p=20k+9$ with $k\in \na$, while $\legendre{-1}{p} =\legendre{5}{p}=-1$ are obtained with $p=20k+3$ or $p=20k+7$ with $k\in \na$. 
Now, take $p=(20\cdot 11) k+j$ with $j=1,3,7$ or $9$ and $k\in \na$. 
By the quadratic reciprocity law, we have 
\begin{equation} \label{QR}
\legendre{11}{p} = (-1)^{\frac{(11-1)}2 \frac{(p-1)}2} \legendre{(20\cdot 11)k+1}{11} =\legendre{1}{11}=1
\end{equation}
for $j=1,9$. 
Similarly, we can do the same considering the cases of different parity in the symbols $\legendre{-1}{p}$ and $\legendre{5}{p}$.
The pattern $\legendre{-1}{p}=-1$ and $\legendre{5}{p}=1$ holds for $p=20k+11$ or $20k+19$, $k\in \na$. 
By quadratic reciprocity, \eqref{QR} above is also true for these values of 
$p$ (however, the pattern $\legendre{-1}{p}=1$, $\legendre{5}{p}=-1$ does not lead to a solution of \eqref{QR}). 

That is, for every prime number of the form $p=220k+j$, with $j=1,9,11,19$ and $k\in \na$, we have that 
	$\legendre{h(p-2)}{p}=1$. 
    Proceeding similarly as before, from the expressions in \eqref{bunch of legendres} one can prove similar results for $R=\{p-j\}$ with $3\le j\le 7$ and we leave it to the reader. 
By Dirichlet's theorem on arithmetic progressions, there are infinitely many prime numbers $p$ of the form $220k+1$, $220k+9$, $220k+11$ and $p=220k+19$ with $k\in \na$, and thus the result follows.
\end{proof}

Notice that by using the same method as above, one can obtain asymptotically good sequences of $6$-block transitive codes over 
$\ff_{p^s}$ for every $s\ge 1$ odd, by considering any two of the expressions in \eqref{bunch of legendres} simultaneously. But now, since $u=3$, we have to take $m=11$ 
(by \eqref{sigmao1}) and hence we have to consider the polynomial $h(t)$ in \eqref{poli k} with $n=6$, that is \eqref{poli k leg} with $2\le k \le 6$. In other words,
$$\legendre{h_6(p-j)}{p^s} = \legendre{1-j}{p} \prod_{k=2}^6  \legendre{j+k}{p} \legendre{k}{p} \legendre{kj+1}{p} = 
\legendre{1-j}{p} \legendre{h_5(p-j)}{p^s} \legendre{j+6}{p} \legendre{6}{p} \legendre{6j+1}{p}. $$
So, for instance for $j=2,3$, by \eqref{h(p-j)}, we have
\begin{align*}
& \legendre{h_6(p-2)}{p^s} = \legendre{-1}{p} \legendre{5}{p} \legendre{11}{p} \legendre{8}{p} \legendre{6}{p} \legendre{13}{p} = \legendre{-1}{p} \legendre{3}{p} \legendre{5}{p} 
   \legendre{11}{p} \legendre{13}{p} , \\  
& \legendre{h(p-3)}{p^s} =  \legendre{-1}{p} \legendre{2}{p} \legendre{5}{p} \legendre{13}{p} \legendre{9}{p} \legendre{6}{p} \legendre{19}{p} = \legendre{-1}{p} \legendre{3}{p} \legendre{5}{p} 
\legendre{13}{p} \legendre{19}{p}.
\end{align*}
It is known that $\legendre{3}{p}=(-1)^{\lfloor \tfrac{p+1}6\rfloor}$, i.e.\@ $\legendre{3}{p}=1$ if $p\equiv \pm 1$ mod $12$ and $p\equiv \pm 5$ mod $12$.
Hence, for a prime of the form $p=60k+1$ we have that  $\legendre{-1}{p} \legendre{3}{p} \legendre{5}{p}=1$. So, consider a prime of the form 
$$p=(11\cdot 13\cdot 19 \cdot 60) k +1 = 163021k+1, \qquad k\in \na.$$ 
By applying quadratic reciprocity again, we get $\legendre{11}{p} =  \legendre{13}{p} =  \legendre{19}{p} =1$. 
In this way, for primes of the form $p=(11\cdot 13\cdot  19 \cdot 60) k +1$, $k\in\na$, the sets $S=\{2,3,4,5,6\}$ and $R=\{0,p-2,p-3\}$ generates an asymptotically good sequence of $6$-block transitive codes over $\ff_{p^s}$ for every $s$ odd. In particular, for $k=1$ we get a good family of $6$-block transitive codes over $\ff_{163021}$.

\bibliographystyle{plain}

\def\cprime{$'$}

\end{document}